\numberwithin{equation}{section}
\newtheorem{proposition}{Proposition}[section]
\newtheorem{theorem}[proposition]{Theorem}
\newtheorem{lemma}[proposition]{Lemma}
\newtheorem{definition}[proposition]{Definition}
\newenvironment{proofof}[1]{\smallskip\noindent\emph{\textbf{Proof of #1.}}%
\hspace{1pt}}{\hspace{-5pt}{\nobreak\quad\nobreak\hfill\nobreak%
$\square$\vspace{8pt}\par}\smallskip\goodbreak}
\newcommand{\pint}[1]{\mathaccent23{#1}}
\newcommand{\Lloc}[1]{\mathbf{L^{#1}_{loc}}}
\newcommand{\C}[1]{\mathbf{C^{#1}}}
\newcommand{\Cc}[1]{\mathbf{C_c^{#1}}}
\newcommand{\modulo}[1]{{\left|#1\right|}}
\newcommand{\norma}[1]{{\left\|#1\right\|}}
\newcommand{\reali}{{\mathbb{R}}}
\newcommand{\interi}{{\mathbb{Z}}}
\newcommand{\naturali}{{\mathbb{N}}}
\newcommand{\tv}{\mathrm{TV}}
\newcommand{\BV}{\mathbf{BV}}
\newcommand{\Lip}{\mathbf{Lip}}
\newcommand{\spt}{\mathop{\mathrm{spt}}}
\newcommand{\sgn}{\mathop{\mathrm{sgn}}}
\DeclareMathOperator{\infess}{essinf}
\renewcommand{\epsilon}{\varepsilon}
\renewcommand{\phi}{\varphi}
\renewcommand{\theta}{\vartheta}
\renewcommand{\L}[1]{\mathbf{L^#1}}
\newcommand{\W}[1]{\mathbf{W^{#1}}}
\newcommand{\Wloc}[1]{\mathbf{W_{loc}^{#1}}}
\renewcommand{\d}[1]{\mathinner{\mathrm{d}{#1}}}
\newcommand{\caratt}[1]{\mathbf{\chi}_{\strut #1}}
\newcommand{\Caption}[1]{
  \begin{minipage}{0.75\linewidth}
    \caption{\footnotesize{#1}}
  \end{minipage}}
\title{A Traffic Model Aware of Real Time Data}
\author{Rinaldo M.~Colombo \\ \small Unit\`a INdAM \\
  \small Universit\`a degli Studi di Brescia \\ \small Via Branze, 38
  \\ \small 25123 Brescia, Italy \\ \small \texttt{Rinaldo.Colombo@UniBs.it} \\
  \and Francesca Marcellini \\ \small Dip.~di Matematica e Applicazioni \\
  \small Universit\`a di Milano -- Bicocca \\ \small Via Cozzi, 55 \\ \small 20125 Milano, Italy \\ \small
  \texttt{Francesca.Marcellini@UniMiB.it}}
\begin{document}

\maketitle

\begin{abstract}
  \noindent
  Nowadays, traffic monitoring systems have access to real time data,
  e.g.~through GPS devices. We propose a new traffic model able to
  take into account these data and, hence, able to describe the
  effects of unpredictable accidents. The well posedness of this model
  is proved and numerical integrations show qualitative features of
  the resulting solutions. As a further motivation for the use of real
  time data, we show that the inverse problem for the
  Lighthill--Whitam~\cite{LighthillWhitham} and
  Richards~\cite{Richards} (LWR) model is ill posed.

  \medskip
  \noindent\textit{2000~Mathematics Subject Classification:} 35L65,
  90B20 \medskip

  \noindent\textit{Key words and phrases:} Macroscopic Traffic Models,
  Hyperbolic Systems of Conservation Laws
\end{abstract}

\section{Introduction}
\label{sec:Intro}

Differently from fluid dynamics, traffic dynamics does not rely on
well established fundamental principles like the conservation of
momentum or energy. Apart from the conservation of the total number of
vehicles, the many traffic models available in the literature all have
to rely on assumptions on the drivers' behavior and these assumptions
always contain some arbitrariness.

On the other hand, present day measurement devices allow a detailed
knowledge of the traffic situation essentially in real time. This
leads to the possibility of improving models by means of real time
data. Here, we propose a model aware of real time data or, in other
words, that encodes these data. We stress that no deterministic model
whatsoever can predict the insurgence of an accident. On the other
hand, the present model is able to take into account such an event and
to describe its effects.

In the current literature, three different approaches are mainly used
to model traffic phenomena: microscopic, macroscopic and kinetic. For
an overview of vehicular traffic models at all scales, we refer to the
review papers~\cite{MR3253235, Bellomo, BressanCanicGaravelloHerty,
  Klar, Piccoli}. Here, we are concerned with \emph{macroscopic}
models, where traffic is described through the fraction $\rho = \rho
(t,x)$ of space occupied by vehicles at time $t$ and at position $x$.

\smallskip

From an analytic point of view, a justification of our insertion of
real time traffic data in the very formulation of the model is
provided by the difficulties inherent to the solution of the inverse
problem for a 1D scalar conservation law. Indeed, a rigorous approach
to the issue of finding the \emph{``right''} speed law $\mathcal{V} =
v (\rho)$ leads to an inverse problem that can be stated as
follows. Find the function $\mathcal{V} = v (\rho)$ so that the
solution to~\eqref{eq:2} best approximates the observed traffic
dynamics. More formally, we are led to consider the inverse problem,
see Figure~\ref{fig:problems}:
\begin{equation}
  \label{eq:1}
  \mbox{find } v \mbox{ so that }
  \int_0^T \modulo{\dot p(t) - v(\rho \left(t,p(t)+ \right)} \d{t}
  \mbox{ is minimal, where }
  \left\{
    \begin{array}{@{}l@{}}
      \partial_t \rho + \partial_x \left(\rho \, v (\rho)\right) =0
      \\
      \rho (0,x) = \rho_{0} (x)
    \end{array}
  \right.
\end{equation}
By means of an example, we show below that problem~\eqref{eq:1} is in
general ill posed.
\begin{figure}[!h]
  \centering
  \begin{psfrags}
    \psfrag{x}{$x$} \psfrag{rho}{$\rho $} \psfrag{t}{$t$}
    \psfrag{a}{$p(t)$}
    \includegraphics[width=3.8cm]{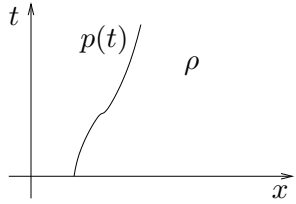}
  \end{psfrags}
  \caption{Situation considered by problem~(\ref{eq:1}). The
    trajectory $p = p (t)$ is measured, while the density $\rho = \rho
    (t,x)$ solves~\eqref{eq:2}.}
  \label{fig:problems}
\end{figure}
Moreover, a positive result in this direction is obtained, see
Proposition~\ref{prop:positive}, but it relies on assumptions that can
be hardly acceptable in a real situation.

\smallskip

The new macroscopic model we propose consists of a conservation law of
the type
\begin{equation}
  \label{eq:2}
  \partial_t \rho + \partial_x \left(\rho \, \mathcal{V} (t,x,\rho)\right) = 0
\end{equation}
where we propose to encode a measured trajectory $p = p (t)$ in the
speed law $\mathcal{V}$, obtaining
\begin{equation}
  \label{eq:3}
  \mathcal{V} (t,x,\rho)
  =
  \chi\left(x -p (t)\right) \, \frac{2\dot p (t) \, v (\rho)}{\dot p (t) + v (\rho)}
  +
  \left(1-\chi\left(x-p (t)\right)\right) v (\rho) \,.
\end{equation}
In other words, we use a time and space dependent speed law that
consists of an interpolation between measured data $p = p (t)$ and a
\emph{standard} speed law $v = v (\rho)$. The smooth function $\chi =
\chi (\xi)$ attains the value $1$ for $\modulo{\xi} \leq \ell$ and
vanishes when $\modulo\xi \geq L$, for two fixed constants $\ell, L$,
with $\ell < L$. Therefore, the speed law in~\eqref{eq:3} assigns the
measured speed $\dot p (t)$ near the vehicle providing the data,
i.e.~for $\modulo{x-p (t)} \leq \ell$, and assigns a \emph{standard}
speed $v\left(\rho (t,x)\right)$ far from the measuring vehicle,
i.e.~when $\modulo{x-p (t)} \geq L$. Related results on models based
on mixed microscopic--macroscopic descriptions are found
in~\cite{ColomboMarcelliniPreprint, LattanzioPiccoli, Marcellina}.

We prove existence, uniqueness and Lipschitz continuous dependence
from initial data of the solutions to the Cauchy problem
for~\eqref{eq:2}--\eqref{eq:3}. By means of a few numerical
integrations, we show below qualitative properties of the solutions
to~\eqref{eq:2}--\eqref{eq:3}. Remark that the extension to the case
of several measured trajectories is of a merely technical nature, both
at the analytic and at the numeric levels.

As a further remark, we observe that the use of real time data makes
the model \emph{``less falsifiable''}. On the other hand, we gain the
possibility of describing the effects of unpredictable accidents. The
current mathematical and engineering literatures show an increasing
interest in this direction, see also~\cite{1240476, ban2009optimal,
  Cheng2006particlefilter, WorkBlandinEtAl}.

\smallskip

The paper is organized as follows: the next section is devoted to the
inverse problem for a standard LWR~\cite{LighthillWhitham, Richards}
model. In Section~\ref{sec:Main} we study the Cauchy Problem for
system~\eqref{eq:3} and in Section~\ref{sec:A} we present some
numerical integrations of this model. All proofs are gathered in
the last section.

\smallskip

Throughout, we denote $\reali^+ = \left[0, +\infty\right[$ and
$\pint{\reali}^+ = \left]0, +\infty\right[$.  A Lipschitz constant of
the map $f$ is denoted $\Lip (f)$.  The maximal density (or
occupancy), i.e., the density at which vehicles are bumper to bumper
and can not move, is normalized to $1$.

\section{On an Inverse Problem for the LWR Model}
\label{sec:Inverse}

As a first step, we show that in general a solution to the inverse
problem~\eqref{eq:1} may fail to exist. Indeed, for $\epsilon \in [-1,
\, 1]$, consider the speed law and the corresponding flow
\begin{eqnarray}
  \label{eq:epsilon}
  v_\epsilon (\rho) = (1+\epsilon\rho) (1-\rho)
  \qquad \mbox{ and } \quad
  f_\epsilon (\rho) = (1+\epsilon\rho) (1-\rho) \rho
\end{eqnarray}
and choose the trajectory $p (t) = t/2$. Note that $f''_\epsilon
(\rho) < 0$ for all $\rho \in [0,1]$ and $\epsilon \in [-1/3,
1/3]$. For simplicity, we consider the Riemann Problem
\begin{equation}
  \label{eq:RP}
  \left\{
    \begin{array}{l}
      \partial_t \rho + \partial_x \left(\rho \, v_\epsilon (\rho)\right) = 0
      \\
      \rho (0,x) =
      \left\{
        \begin{array}{rrcl}
          1/8 & x & < & 0
          \\
          3/8 & x & > & 0
        \end{array}
      \right.
    \end{array}
  \right.
\end{equation}
whose solution is depicted in~Figure~\ref{fig:RP} in the two cases
$\epsilon <0$ and $\epsilon>0$.
\begin{figure}[htpb]
  \centering
  \begin{psfrags}
    \psfrag{x}{$x$} \psfrag{t}{$t$}
    \psfrag{rr}{$1/8$}\psfrag{r}{$3/8$}
    \includegraphics[width=4.8cm]{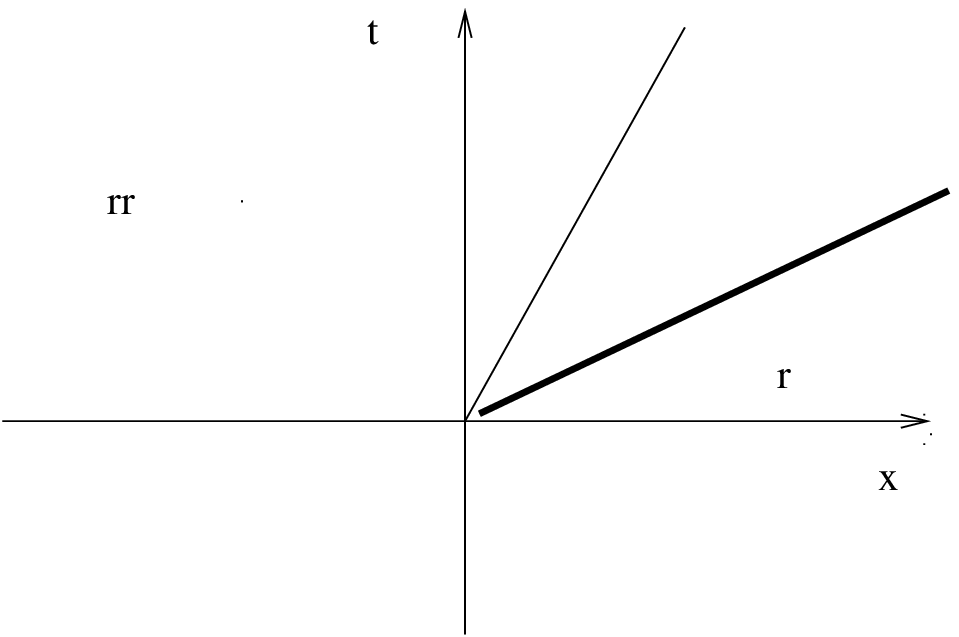}
  \end{psfrags}
  \hfil
  \begin{psfrags}
    \psfrag{x}{$x$} \psfrag{t}{$t$}
    \psfrag{u}{$1/8$}\psfrag{uu}{$3/8$}
    \includegraphics[width=4.8cm]{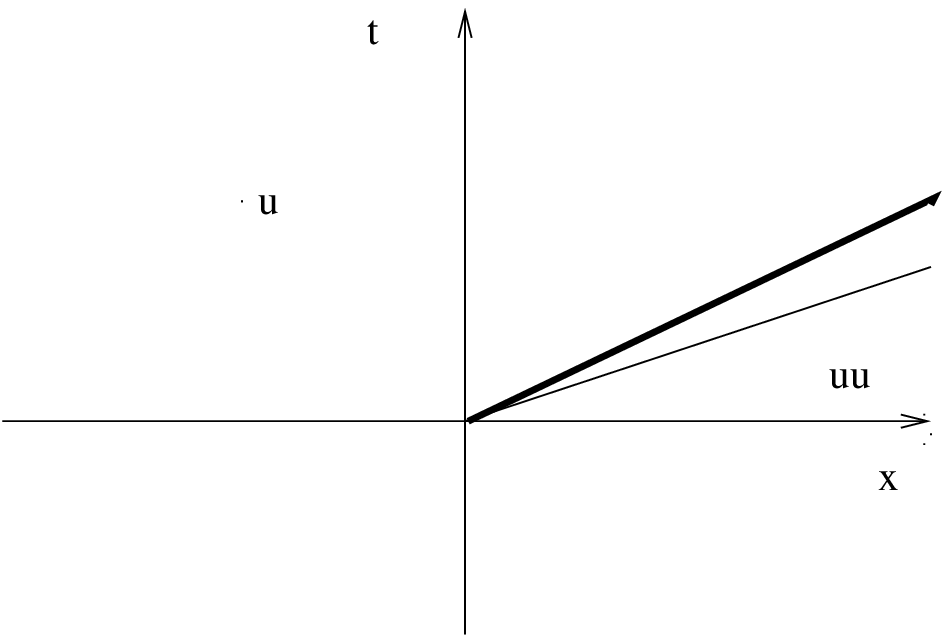}
  \end{psfrags}
  \Caption{Solution to the Riemann Problem~\eqref{eq:RP} with
    $v_{\epsilon}$ and $f_{\epsilon}$ as in~\eqref{eq:epsilon}. Left,
    the case $\epsilon<0$ and, right, $\epsilon>0$.}
  \label{fig:RP}
\end{figure}

The discrepancy between measured data and the description provided by
the LWR model can be estimated through straightforward computations as
follows:
\begin{displaymath}
  \int_0^T
  \modulo{
    \dot p (t)
    -
    v_\epsilon\left(\rho_\epsilon\left(t, p (t)\right)\right)}
  \d{t}
  =
  \left\{
    \begin{array}{l@{\quad\mbox{if }}r@{\,}c@{\,}l}
      \left(\frac{9}{8} + \frac{15\epsilon}{64}\right) T
      & \epsilon& \leq & 0
      \\
      \left(\frac{3}{8} + \frac{7\epsilon}{64}\right) T
      & \epsilon& > & 0
    \end{array}
  \right.
\end{displaymath}
This shows that the map
\begin{equation}
  \label{eq:phi}
  \begin{array}{rcl}
    \phi \colon [-1/3, \, 1/3] & \to & \reali
    \\
    \epsilon & \to &
    \displaystyle
    \int_0^T
    \modulo{
      \dot p (t)
      -
      v_\epsilon\left(\rho_\epsilon\left(t, p (t)+\right)\right)}
    \d{t}
  \end{array}
\end{equation}
does not attain a minimum, see Figure~\ref{fig:negative}. Clearly,
this example can be easily extended to more general, non constant,
functions $p$ and to more general Cauchy initial data.

\begin{figure}[htpb]
  \centering
  \begin{psfrags}
    \psfrag{e}{$\epsilon$} \psfrag{f}{$\phi$}
    \includegraphics[width=4.8cm]{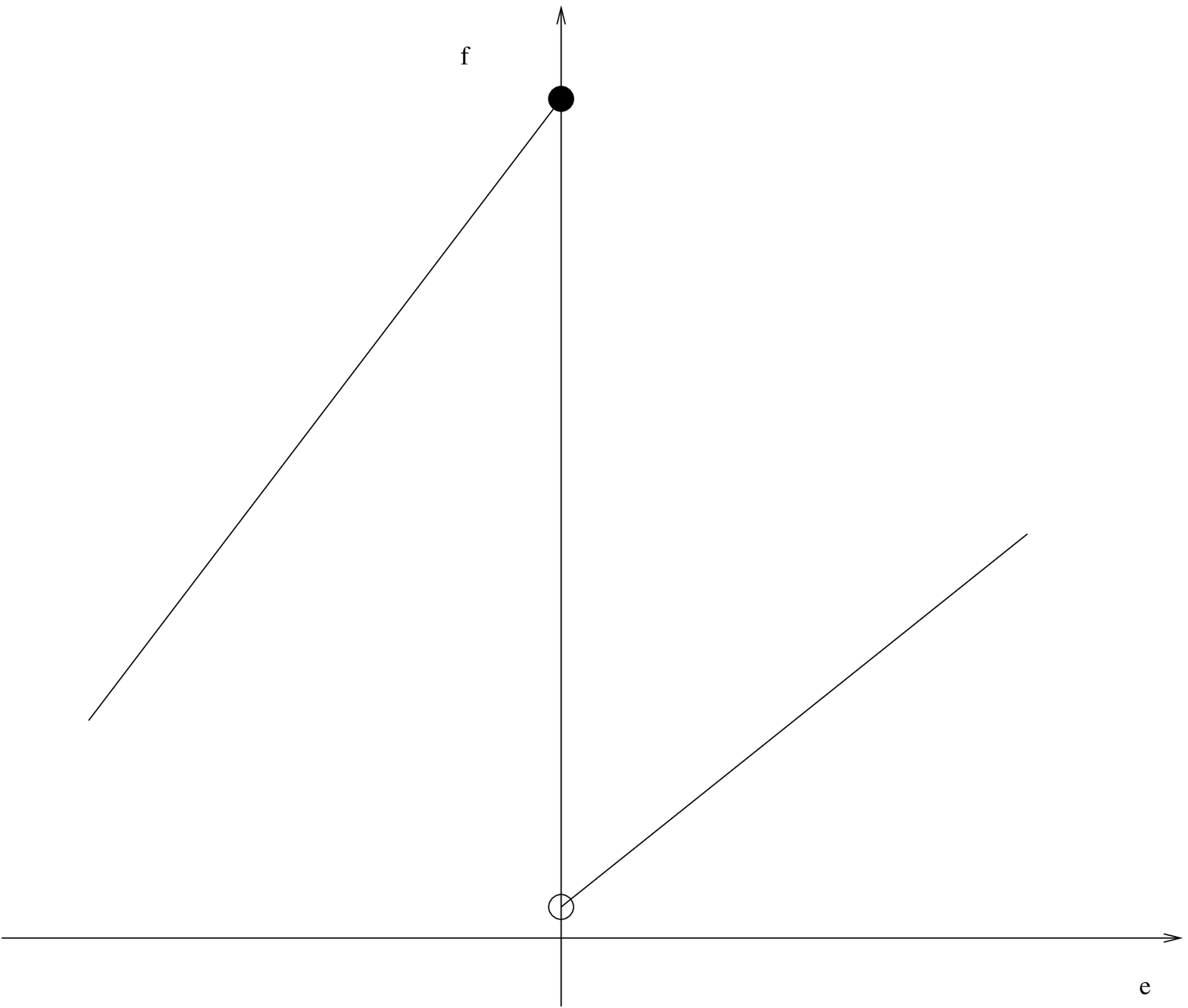}
  \end{psfrags}
  \Caption{The map $\phi$ defined in~\eqref{eq:phi} does not attain a
    minimum for $\epsilon \in [-1/3,\, 1/3]$. Above, $\phi(0+) = 3/8$
    and $\phi (0-) =9/8$.\label{fig:negative}}
\end{figure}

Due to the example above, one is lead to consider problem~\eqref{eq:1}
in a specific class of speed laws. A positive result is available in
the class of speed laws $v_{\strut V} (\rho) = V (1-\rho)$ with $V \in
[\check V, \hat V]$ being the maximal speed of cars along the
considered road. Clearly, we assume throughout that $\hat V > \check V
> 0$.

The following lemma plays a key role to obtain the basic continuity
estimate on the dependence of the error functional $\gamma \to
\int_0^T \modulo{\dot p (t) -v \left(\rho\left(t, \gamma
      (t)\right)\right)} \d{t}$ from a generic (non-characteristic)
curve $\gamma = \gamma (t)$.

\begin{lemma}
  \label{lem:1}
  Fix $T > 0$, $f \in \C1 (\reali; \reali)$ and $\rho_o \in (\L1 \cap
  \BV) (\reali; [0,1])$. Call $\rho$ the solution to
  \begin{equation}
    \label{eq:lem1}
    \left\{
      \begin{array}{l}
        \partial_t \rho + \partial_x f (\rho) = 0
        \\
        \rho (0,x) = \rho_o (x) \,.
      \end{array}
    \right.
  \end{equation}
  Choose two curves $\gamma_1, \gamma_2 \in \W{1,\infty} ([0,T];
  \reali)$, non characteristic in the sense that there exists a $c>0$
  such that
  \begin{displaymath}
    \dot \gamma_i (t) > f'\left(\rho (t, \gamma_i (t))\right) + c
    \qquad \mbox{ for all } t \in [0,T]
    \quad \mbox{ and } i=1,2 \,.
  \end{displaymath}
  Then,
  \begin{displaymath}
    \int_0^T
    \modulo{
      \rho\left(t, \gamma_1 (t)\right)
      -
      \rho\left(t, \gamma_2 (t)\right)}
    \d{t}
    \leq
    \frac{1}{c} \,
    \tv(\rho_o) \,
    \norma{\gamma_1 - \gamma_2}_{\C0 ([0,T]; \reali)} \,.
  \end{displaymath}
\end{lemma}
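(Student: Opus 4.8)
The plan is to bound the time integral by counting, wave by wave, how long each wave of the entropy solution can stay between the two curves, exploiting that both curves run strictly faster than every characteristic. First I would remove any ambiguity in the ordering of $\gamma_1$ and $\gamma_2$: replacing the pair by $\min\{\gamma_1,\gamma_2\}$ and $\max\{\gamma_1,\gamma_2\}$ changes neither the integrand nor $\norma{\gamma_1-\gamma_2}_{\C0}$, while the two new curves are again in $\W{1,\infty}$ and still non-characteristic a.e.\ (the a.e.\ derivative of a $\min$ or $\max$ of Lipschitz functions coincides with that of the selected branch). Hence I may assume $\gamma_1(t)\le\gamma_2(t)$ and estimate, for a.e.\ $t$,
\begin{displaymath}
  \modulo{\rho\left(t,\gamma_1(t)\right)-\rho\left(t,\gamma_2(t)\right)}
  \le
  \tv\left(\rho(t,\cdot);\, \left[\gamma_1(t),\gamma_2(t)\right]\right),
\end{displaymath}
so that the left-hand side of the claim is at most $\int_0^T \tv\left(\rho(t,\cdot); [\gamma_1(t),\gamma_2(t)]\right)\d{t}$.

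Next I would read this integral off the wave structure of $\rho$. Approximating $\rho$ by wave-front tracking solutions $\rho^\nu$, piecewise constant in $x$ with fronts $t\mapsto x_k(t)$ of signed strength $\sigma_k$, the inner total variation equals $\sum_k \modulo{\sigma_k}$ over the fronts sitting inside $[\gamma_1(t),\gamma_2(t)]$, and Fubini rewrites the double integral as a sum, over the bits of variation, of the time each bit spends in the strip. The core is a transversality estimate for one front. Since the hypothesis controls $\dot\gamma_i$ against $f'$ at the trace $\rho(t,\gamma_i(t))$ for \emph{every} $t$, and in the approximation the small front met by the curve moves with a speed differing from $f'$ at that trace by an amount vanishing with the maximal front strength, one has $\dot\gamma_i(t)-\dot x_k(t)\ge c-\epsilon_\nu$ with $\epsilon_\nu\to0$; thus $t\mapsto\gamma_i(t)-x_k(t)$ increases at rate at least $c-\epsilon_\nu$. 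A front therefore enters the strip across $\gamma_2$ and leaves it across $\gamma_1$, each boundary being met exactly once; following $t\mapsto\gamma_1(t)-x_k(t)$ from the entrance instant $t_{\mathrm{in}}$, where it equals $\gamma_1(t_{\mathrm{in}})-\gamma_2(t_{\mathrm{in}})\ge -\norma{\gamma_1-\gamma_2}_{\C0}$, to the exit instant $t_{\mathrm{out}}$, where it vanishes, yields $t_{\mathrm{out}}-t_{\mathrm{in}}\le \frac{1}{c-\epsilon_\nu}\norma{\gamma_1-\gamma_2}_{\C0}$.

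It remains to sum without over-counting, which I would do Lagrangianly: the total variation is a non-increasing mass transported along generalized characteristics --- same-sign fronts merge additively, cancellations only lower the mass --- so the distinct bits ever present in the strip all descend from $\rho_o$ and carry total strength at most $\tv(\rho(t,\cdot))\le\tv(\rho_o)$. Integrating the per-bit time bound against this variation measure gives the claimed estimate for $\rho^\nu$, with $1/c$ replaced by $1/(c-\epsilon_\nu)$, uniformly in $\nu$, and I would conclude by letting $\nu\to\infty$. I expect the two genuine difficulties to lie exactly here: first, that the variation counted in the strip must be controlled by $\tv(\rho_o)$ and not by the larger sum of strengths over all front segments, which is what forces the Lagrangian accounting rather than a naive per-segment sum; and second, the passage to the limit, which requires the traces $\rho^\nu(\cdot,\gamma_i(\cdot))$ to converge to $\rho(\cdot,\gamma_i(\cdot))$ in $\L1([0,T];\reali)$. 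This last point is where the non-characteristic hypothesis is used a second time, since a curve transversal to all characteristics carries a strong $\L1$ trace that is stable under front tracking. The transversality computation itself is elementary and is precisely what produces both the factor $1/c$ and the $\C0$-norm in the statement.
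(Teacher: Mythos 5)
Your plan diverges from the paper's proof in a way that opens a real gap at its central step. You assert that $t\mapsto\gamma_i(t)-x_k(t)$ increases at rate at least $c-\epsilon_\nu$ \emph{throughout} a front's sojourn in the strip, but the non-characteristic hypothesis controls $f'$ only at the trace values $\rho\left(t,\gamma_i(t)\right)$, i.e., only at the instants when the front in question is adjacent to one of the two curves. Strictly inside the strip the adjacent states of a front are arbitrary values of the solution, and $f\in\C1$ is arbitrary there, so nothing in the hypotheses prevents $f'$ at those interior values from equalling or exceeding $\dot\gamma_i$: for instance a contact-type front whose two states have characteristic speed equal to $\dot\gamma_i$ co-moves with the curves and remains in the strip for a time of order $T$, not of order $\frac{1}{c}\norma{\gamma_1-\gamma_2}_{\C0}$. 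This invalidates the per-front sojourn bound, and it also undermines your very first majorization $\modulo{\rho\left(t,\gamma_1(t)\right)-\rho\left(t,\gamma_2(t)\right)}\leq\tv\left(\rho(t,\cdot);\left[\gamma_1(t),\gamma_2(t)\right]\right)$ as a usable reduction: the time integral of the strip variation charges lingering interior waves that never touch either trace, so under the stated curve-only hypotheses it need not be bounded by $\frac{1}{c}\,\tv(\rho_o)\,\norma{\gamma_1-\gamma_2}_{\C0}$. The "enters through $\gamma_2$, exits through $\gamma_1$, each boundary met exactly once" picture relies on the same unavailable interior speed control.

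The paper's proof is built so that the hypothesis is only ever invoked where it bites. Working with a front-tracking approximation $\rho^n$, it compares the times $t_\alpha$ and $t'_\alpha$ at which two nearby curves, $\gamma_1$ and $\gamma_1+\eta$, cross the \emph{same} discontinuity line $t\mapsto\lambda_\alpha t+x_\alpha$; at a crossing the front's states are the traces, so the transversality gives $\modulo{t'_\alpha-t_\alpha}\leq\frac{1}{c}\norma{\eta}_{\C0}$, and summing against the jump strengths along the trace — bounded by $\tv\left\{\rho^n\left(\cdot,\gamma_1(\cdot)\right),[0,T]\right\}\leq\tv(\rho_o)$, which is the rigorous form of your Lagrangian mass accounting — yields the local Lipschitz estimate, extended from small $\eta$ to the full pair by a homotopy $\theta\mapsto\theta\gamma_2+(1-\theta)\gamma_1$ and concluded by dominated convergence (which also disposes of your trace-convergence worry). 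Fronts sitting inside the strip but crossed by neither curve simply never enter this accounting. You could repair your argument by strengthening the hypothesis to a spatially uniform one, $\dot\gamma_i(t)>f'\left(\rho(t,x)\right)+c$ for all $x$ between the curves — which is in fact what holds in the paper's application via the maximum principle, and what the paper's own homotopy through intermediate curves implicitly uses — but as a proof of the lemma as stated, the strip-variation route does not go through.
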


\noindent The proof is deferred to Section~\ref{sec:TD}.

\smallskip

We are now ready to prove the continuity result that implies the
existence of a solution to the inverse problem~\eqref{eq:1}.

\begin{proposition}
  \label{prop:positive}
  Let $T>0$, $\hat V, \check V$ be such that $\hat V > \check V >
  0$. Fix $\check \rho \in \left]0, 1 \right[$. If the initial datum
  $\rho_o \in (\L1 \cap \BV) (\reali; [0,1])$ and the path $p \in
  \W{1,\infty}$ are such that
  \begin{equation}
    \label{eq:brutte}
    \infess_{x \in \reali} \rho_o > \check \rho
    \quad \mbox{ and } \quad
    \infess_{t \in [0,T]} \dot p \geq \hat V (1-2\check\rho) \,,
  \end{equation}
  then the map
  \begin{displaymath}
    \begin{array}{@{}rcl@{}}
      \mathcal{E} \colon [\check V, \hat V] & \to & \reali
      \\
      V & \to & \displaystyle\int_0^T \modulo{ \dot p (t) - v_{\strut
          V}\left(\rho_{\strut V}\left(t, p (t)\right)\right)} \d{t}
    \end{array}
    \mbox{ where }
    \left\{
      \begin{array}{l}
        \partial_t \rho_{\strut V}
        +
        \partial_x \left(\rho_{\strut V} \, V \, (1-\rho_{\strut V})\right)
        =0
        \\
        \rho_{\strut V} (0,x) = \rho_o (x)
      \end{array}
    \right.
  \end{displaymath}
  is continuous.
\end{proposition}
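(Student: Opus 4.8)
The plan is to prove the stronger statement that $\mathcal{E}$ is Lipschitz continuous on $[\check V,\hat V]$. First I would reduce the problem to an estimate on the densities. Since $v_{\strut V}(\rho)=V(1-\rho)$, the reverse triangle inequality gives
\begin{displaymath}
  \modulo{\mathcal{E}(V_1)-\mathcal{E}(V_2)}
  \le
  \int_0^T \modulo{V_1\left(1-\rho_{\strut V_1}(t,p(t))\right) - V_2\left(1-\rho_{\strut V_2}(t,p(t))\right)}\d{t} \,,
\end{displaymath}
and the splitting $V_1(1-a)-V_2(1-b)=V_1(b-a)+(V_1-V_2)(1-b)$, together with $V_i\le\hat V$ and $1-\rho\in[0,1]$, yields $\modulo{\mathcal{E}(V_1)-\mathcal{E}(V_2)}\le \hat V\,\mathcal{I}+T\modulo{V_1-V_2}$, where $\mathcal{I}=\int_0^T\modulo{\rho_{\strut V_1}(t,p(t))-\rho_{\strut V_2}(t,p(t))}\d{t}$. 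It thus suffices to bound $\mathcal{I}$ by a constant multiple of $\modulo{V_1-V_2}$.

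Next I would use assumption~\eqref{eq:brutte} to make $p$ uniformly non-characteristic. By the maximum principle, every $\rho_{\strut V}$ satisfies $\rho_{\strut V}(t,x)\ge\rho_*:=\infess_{x}\rho_o>\check\rho$ for a.e.~$(t,x)$. Writing $f_{\strut V}(\rho)=V\rho(1-\rho)$, so that $f_{\strut V}'(\rho)=V(1-2\rho)$, the bound $\rho_{\strut V}\ge\rho_*$ and the lower bound $\dot p\ge\hat V(1-2\check\rho)$ give, after a short computation, $\dot p(t)-f_{\strut V}'(\rho_{\strut V}(t,p(t)))\ge c$ for a constant $c>0$ that does not depend on $V$. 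Hence $p$ is non-characteristic for every $V\in[\check V,\hat V]$ with the same $c$, the traces $\rho_{\strut V}(t,p(t))$ are well defined for a.e.~$t$, and Lemma~\ref{lem:1} is applicable with this common $c$.

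The key structural observation is that $f_{\strut V}=V\,f_1$ with $f_1(\rho)=\rho(1-\rho)$, so by uniqueness the solutions rescale as $\rho_{\strut V}(t,x)=\rho_1(Vt,x)$, where $\rho_1$ is the solution of~\eqref{eq:lem1} with flux $f_1$. Therefore $\mathcal{I}=\int_0^T\modulo{\rho_1(V_1t,p(t))-\rho_1(V_2t,p(t))}\d{t}$ is a difference of values of the single entropy solution $\rho_1$ at two rescaled times, taken along the vertical segments $\{x=p(t)\}$. Writing the increment as the integral of $\partial_s\rho_1$ over $s\in[V_2t,V_1t]$, using the equation $\partial_s\rho_1=-\partial_x f_1(\rho_1)$ (whence $\modulo{\partial_s\rho_1}\le\norma{f_1'}_{\L{\infty}}\modulo{\partial_x\rho_1}$ as measures), substituting $s=Vt$ and applying Fubini, I would reduce to
\begin{displaymath}
  \mathcal{I}\le T\,\norma{f_1'}_{\L{\infty}}\int_{V_2}^{V_1}\left(\int_0^T\modulo{\partial_x\rho_{\strut V}(t,p(t))}\d{t}\right)\d{V} \,.
\end{displaymath}

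The inner integral is exactly the amount of total variation of $\rho_{\strut V}(t,\cdot)$ crossing the non-characteristic curve $p$ on $[0,T]$. Applying Lemma~\ref{lem:1} to the two curves $p$ and $p+\delta$---both non-characteristic with the same $c$, since the estimate in the second paragraph used only $\rho_{\strut V}\ge\rho_*$---and letting $\delta\to0^+$ gives $\int_0^T\modulo{\partial_x\rho_{\strut V}(t,p(t))}\d{t}\le \frac1c\tv(\rho_o)$, uniformly in $V$ because $\tv(\rho_{\strut V}(t,\cdot))\le\tv(\rho_o)$. Since $\norma{f_1'}_{\L{\infty}([0,1])}=1$, this yields $\mathcal{I}\le\frac{T}{c}\tv(\rho_o)\modulo{V_1-V_2}$ and hence the Lipschitz continuity of $\mathcal{E}$. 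I expect the main obstacle to be the rigorous treatment of the $\BV$ traces underlying this chain: giving meaning to $\rho_{\strut V}(t,p(t))$ and to the traces of the measures $\partial_s\rho_1$ and $\partial_x\rho_1$ along $p$, and justifying both the Fubini step and the passage $\delta\to0^+$---which is precisely the transversal-crossing control that Lemma~\ref{lem:1} is designed to provide.
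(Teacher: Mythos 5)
Your skeleton coincides with the paper's up to a point: the same reduction of $\modulo{\mathcal{E}(V_1)-\mathcal{E}(V_2)}$ to the density integral $\mathcal{I}$ plus $T\modulo{V_1-V_2}$, the same use of the maximum principle and of~\eqref{eq:brutte} to produce a non-characteristic constant $c$ uniform in $V$, and Lemma~\ref{lem:1} as the engine. The genuine divergence is the scaling. The paper rescales in \emph{space}, asserting $\rho_{\strut V_2}(t,x)=\rho_{\strut V_1}(t,\frac{V_1}{V_2}x)$, so that both traces become values of the single solution $\rho_{\strut 1}$ along the two curves $p/V_1$ and $p/V_2$ at \emph{equal times}; Lemma~\ref{lem:1} then applies verbatim and yields a Lipschitz bound in three lines. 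You rescale in \emph{time}, $\rho_{\strut V}(t,x)=\rho_{\strut 1}(Vt,x)$, and it is worth noting that your version is the sound one: the paper verifies its scaling only against the PDE, and it is incompatible with the common initial datum, since $\rho_{\strut V_1}(0,\frac{V_1}{V_2}x)=\rho_o(\frac{V_1}{V_2}x)\neq\rho_o(x)$ in general, whereas $\rho_{\strut 1}(V\cdot 0,x)=\rho_o(x)$. The price of the correct scaling is that your two evaluation points differ in time rather than in space, so Lemma~\ref{lem:1} --- which compares one solution along two curves at the same times --- no longer applies directly, and you are forced into the extra machinery of your last two paragraphs.

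That machinery is where the proposal stops being a proof: the restriction of $\partial_s\rho_{\strut 1}$ to the vertical segments $\{p(t)\}\times[V_2t,V_1t]$, the pointwise quantity $\modulo{\partial_x\rho_{\strut V}(t,p(t))}$, the Fubini exchange, and the $\delta\to0^+$ passage are all symbolic operations on measures that have no pointwise values along a curve; they presuppose a BV slicing and trace development that you flag but do not carry out. The gap is real but closable with exactly your two ingredients, provided you do \emph{not} send $\delta\to0$. For $z\in[0,\delta]$ apply Lemma~\ref{lem:1} to the curves $p$ and $p+z$ (both non-characteristic with your uniform $c$) for each of $\rho_{\strut V_1},\rho_{\strut V_2}$, then average over $z$:
\begin{displaymath}
  \mathcal{I}
  \leq
  \frac{2\delta}{c}\,\tv(\rho_o)
  +
  \frac{1}{\delta}\int_0^T
  \norma{\rho_{\strut V_1}(t,\cdot)-\rho_{\strut V_2}(t,\cdot)}_{\L1(\reali;\reali)}\d{t}
  \leq
  \frac{2\delta}{c}\,\tv(\rho_o)
  +
  \frac{T^2}{2\delta}\,\Lip(f_1)\,\tv(\rho_o)\,\modulo{V_1-V_2}\,,
\end{displaymath}
where the middle term is bounded via your temporal scaling and the standard $\L1$-in-space, Lipschitz-in-time estimate $\norma{\rho_{\strut 1}(s_1,\cdot)-\rho_{\strut 1}(s_2,\cdot)}_{\L1}\leq\Lip(f_1)\,\tv(\rho_o)\,\modulo{s_1-s_2}$; choosing $\delta=\modulo{V_1-V_2}^{1/2}$ gives $\mathcal{I}\leq C\,\modulo{V_1-V_2}^{1/2}$ --- H\"older rather than Lipschitz, but amply sufficient for the continuity claimed in Proposition~\ref{prop:positive}, and free of any trace theory. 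One last caveat you share with the paper: your ``short computation'' for the uniform $c$, like the paper's chain $\frac{\hat V}{V_i}(1-2\check\rho)\geq 1-2\check\rho$, is valid only when $\check\rho\leq 1/2$ (for $\check\rho>1/2$ the relevant inequalities reverse and a uniform $c$ need not exist), so this is a limitation of the regime of~\eqref{eq:brutte}, not a defect specific to your argument.
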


\noindent The proof is deferred to Section~\ref{sec:TD}.

The existence of solution to the inverse problem~\eqref{eq:1} now
follows through a standard {Weierstra\ss} argument. Note however that
the two conditions in~\eqref{eq:brutte} do not agree with the needs of
a real application of this result. The former inequality requires the
vehicular density to be \emph{high}. At the same time, the latter
inequality in~\eqref{eq:brutte} imposes that the measured speed be
greater than $\hat V (1-\check \rho)$ uniformly in $\rho$, for $\rho
\in [\check \rho, 1]$. These two conditions are somewhat contradictory
to expecting that $\dot p$ is well approximated by $V (1-\rho)$ with
$V \in [\check V, \hat V]$.

\section{A Traffic Model Encoding Real Time Data}
\label{sec:Main}

This section is devoted to the Cauchy Problem for~\eqref{eq:3}, more
precisely:
\begin{equation}
  \label{eq:CP}
  \left\{
    \begin{array}{l}
      \displaystyle
      \partial_t \rho + \partial_x f (t,x,\rho) = 0
      \\
      \rho (0,x) = \rho_o (x)
    \end{array}
  \right.
\end{equation}
where $\rho_o \in \L\infty (\reali; [0,1])$ and
\begin{equation}
  \label{eq:CP1}
  \!\!\!  \!\!\!  \!\!\!
  \begin{array}{@{}r@{\;}c@{\;}@{}l}
    \displaystyle
    f (t,x,\rho)
    & = &
    \displaystyle
    \rho \, \mathcal{V} (t,x,\rho)
    \\
    \displaystyle
    \mathcal{V} (t,x,\rho)
    & = &
    \left\{
      \begin{array}{@{}ll@{}}
        \displaystyle
        \chi\left(x -p (t)\right)
        \frac{2 \, \dot p (t) \, v (\rho)}{\dot p (t) + v (\rho)}
        +
        \left[1-\chi\left(x-p (t)\right)\right] v (\rho)
        & \left(\dot p (t), v (\rho)\right) \neq (0,0)
        \\
        0 & \left(\dot p (t), v (\rho)\right) = (0,0)
      \end{array}
    \right.
    \!\!\!  \!\!\!  \!\!\!  \!\!\!
  \end{array}
\end{equation}
We posit below the following assumptions on the functions appearing
in~\eqref{eq:CP}--\eqref{eq:CP1}. Throughout, the maximal speed $V$ is
a fixed positive constant.
\begin{description}
\item[(v)] $v \in \C{0,1} ([0,1]; [0,V])$ is such that $\left\{
    \begin{array}{ll}
      v (1) = 0
      \\
      \rho \to v (\rho) & \mbox{ is non increasing,}
      \\
      \rho \to \rho \, v (\rho) & \mbox{ is strictly concave,}
      \\
      \rho \to \frac{\rho \, w \, v (\rho)}{w + v (\rho)}
      & \mbox{ is strictly concave, } \forall\,w > 0\,.
    \end{array} \right.$
\item[(p)] $p \in \C{1,1} (\reali^+; \reali)$ is such that $\dot p
  \geq 0$ for a.e.~$t \in \reali^+$.
\item[($\boldsymbol{\chi}$)] $\chi \in \Cc1 (\reali^+; [0,1])$.
\end{description}

\noindent Clearly, the usual choice $v (\rho) = V(1-\rho)$, for $V >
0$, satisfies~\textbf{(v)}. Moreover, as soon as $v \in \C2$, the
requirement that the map $\rho \to \frac{\rho \, w \, v (\rho)}{w
  + v (\rho)}$ be strictly concave follows from the other two
requirements on the function $v$, see Lemma~\ref{lem:conca} in
Section~\ref{sec:TD}. Condition~\textbf{(p)} simply states that the
acceleration of the measured trajectory is bounded and the speed has a
definite sign. The interpolating function $\chi$ needs only to be
sufficiently regular and to attain values in $[0,1]$, so that
$\mathcal{V}$ varies smoothly between $v (\rho)$, far from $p (t)$,
and the harmonic mean between $v (\rho)$ and $\dot p$, near to $p
(t)$.

The current literature, e.g.~\cite{Andreianov2000, KarlsenRisebro2003,
  Kruzkov, Panov_2010_2, Panov_2010_1}, offers different definitions
of solution to~\eqref{eq:CP}--\eqref{eq:CP1}.
Following~\cite{KarlsenRisebro2003, Kruzkov}, we first recall the
classical Kru\v zkov definition.

\begin{definition}[{\cite[Definition~1]{Kruzkov}}]
  \label{def:sol}
  Let $\rho_o \in \L\infty (\reali; [0,1])$. A map $\rho \in \L\infty
  (\reali^+ \times \reali; [0,1])$ is a \emph{Kru\v zkov solution}
  to~\eqref{eq:CP} if for any $k \in \reali$ and for any $\phi \in
  \Cc\infty (\pint{\reali}^+ \times \reali; \reali^+)$,
  \begin{equation}
    \label{eq:DefKruzkov}
    \begin{array}{l@{}l@{}l}
      \displaystyle
      \int_{\reali^+} \int_{\reali}
      &
      \displaystyle
      \Big[
      \modulo{\rho (t,x) - k} \, \partial_t \phi (t,x)
      \\
      &
      \displaystyle
      +
      \sgn\left(\rho (t,x) - k\right)
      \left(f\left(t, x, \rho (t,x)\right) - f (t,x,k)\right)
      \partial_x \phi (t,x)
      \\
      &
      \displaystyle
      -
      \sgn \left(\rho (t,x) - k\right)
      \partial_x f (t,x,k) \, \phi (t,x)
      \Big] \d{x} \d{t}
      &
      \geq 0
    \end{array}
  \end{equation}
  and there exists a set $\mathcal{E} \subset \reali$ of zero Lebesgue
  measure such that
  \begin{equation}
    \label{eq:DefKruzkov0}
    \lim_{t \to 0+, t \in [0,T] \setminus\mathcal{E}}
    \int_{\reali} \modulo{\rho (t,x) - \rho_o (x)} \d{x} = 0 \,.
  \end{equation}
\end{definition}

The weaker concept of solution proposed by Panov is of use in the
proofs below.

\begin{definition}[{\cite[Definition~3]{Panov_2010_1}}]
  \label{def:Panov}
  For any $k \in \reali$, call $\mu^k_c$, respectively $\mu^k_s$, the
  continuous, respectively singular, part of the distributional
  derivative $(t,x) \to \partial_x f (t,x,k)$ of the map $(t,x) \to f
  (t,x,k)$. A map $\rho \in \L\infty (\reali^+ \times \reali; [0,1])$
  is a \emph{Panov solution} to~\eqref{eq:CP}--\eqref{eq:CP1} if for
  any $k \in \reali$ and for any $\phi \in \Cc\infty (\reali^+ \times
  \reali; \reali^+)$,
  \begin{equation}
    \label{eq:DefPanov}
    \begin{array}{l@{}l@{}l}
      \displaystyle
      \int_{\reali^+} \int_{\reali}
      \Big[
      &
      \displaystyle
      \modulo{\rho (t,x) - k} \, \partial_t \phi (t,x)
      \\
      &
      \displaystyle
      +
      \sgn\left(\rho (t,x) - k\right)
      \left(f\left(t, x, \rho (t,x)\right) - f (t,x,k)\right)
      \partial_x \phi (t,x)
      \Big] \d{x} \d{t}
      \\
      \displaystyle
      -
      \int_{\reali^+} \int_{\reali}
      &
      \displaystyle
      \sgn \left(\rho (t,x) - k\right)
      \, \phi (t,x)
      \d{\mu^k_c(t,x)}
      +
      \int_{\reali^+} \int_{\reali}
      \phi (t,x) \d{\mu^k_s (t,x)}
      \\[12pt]
      \displaystyle
      +
      \int_{\reali^+}
      &
      \displaystyle
      \modulo{\rho_o (x) - k} \, \phi (0,x) \d{x}
      &
      \geq 0 \,.
    \end{array}
  \end{equation}
\end{definition}

The well posedness of~\eqref{eq:CP} is obtained through the following
propositions, whose proofs are detailed in
Section~\ref{sec:TD}. First, the existence of Panov solutions is
obtained.

\begin{proposition}
  \label{prop:exi}
  Let~\textbf{(v)}, \textbf{(p)} and~\textbf{($\boldsymbol{\chi}$)}
  hold. Let $\rho_o \in \L\infty (\reali; [0,1])$. Then,
  \cite[Theorem~2]{Panov_2010_1} applies, so that
  problem~\eqref{eq:CP}--\eqref{eq:CP1} admits a Panov solution in the
  sense of Definition~\ref{def:Panov}.
\end{proposition}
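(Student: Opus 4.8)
The plan is to verify that the hypotheses of Panov's existence theorem, namely \cite[Theorem~2]{Panov_2010_1}, are met by the flux $f$ defined in~\eqref{eq:CP1}, so that the existence of a Panov solution follows immediately from the cited result. The work is therefore entirely about checking regularity and structural conditions on $f (t,x,\rho)$; there is no PDE construction to carry out here since that is delegated to the external theorem. First I would fix the range $\rho \in [0,1]$ and $k \in \reali$ (it suffices to consider $k \in [0,1]$ by the comparison/maximum principle, since the initial datum and solution take values in $[0,1]$) and record that the admissible set $[0,1]$ is invariant, which guarantees the $\L\infty$ bound required for the notion of solution.

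The central task is to confirm the regularity of $f$ in each variable. In $\rho$, the map $\rho \mapsto f (t,x,\rho)$ is a convex combination, with weight $\chi(x-p(t)) \in [0,1]$, of $\rho\, v(\rho)$ and the harmonic-mean flux $\rho\, \frac{2\dot p\, v(\rho)}{\dot p + v(\rho)}$; by assumption~\textbf{(v)} both are Lipschitz (indeed strictly concave), and the value at the degenerate point $(\dot p, v(\rho)) = (0,0)$ is set to $0$ consistently, so continuity is preserved there. In the spatial variable $x$, the only $x$-dependence enters through $\chi(x-p(t))$, which by~\textbf{($\boldsymbol{\chi}$)} is $\C1$ and compactly supported; hence $x \mapsto f(t,x,k)$ is $\C1$ for each fixed $t$ and $k$, so its distributional $x$-derivative is a continuous function and the \emph{singular} part $\mu^k_s$ of $\partial_x f(t,x,k)$ vanishes. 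This is the key simplification: the general framework of Definition~\ref{def:Panov} allows a singular measure $\mu^k_s$ precisely to accommodate fluxes with spatial discontinuities, but here the smoothness of $\chi$ removes it, leaving only the continuous part $\mu^k_c$.

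The regularity in $t$ comes from~\textbf{(p)}: $p \in \C{1,1}$ gives $\dot p$ Lipschitz, so $t \mapsto \chi(x-p(t))$ and $t \mapsto \frac{2\dot p(t)\, v(\rho)}{\dot p(t) + v(\rho)}$ inherit enough regularity (Lipschitz in $t$, locally uniformly) for the time-dependence required by Panov's hypotheses. I would then combine these observations to exhibit the precise structural form Panov requires—typically that $f(t,x,\rho)$ be, say, continuous in $(t,x)$ with $\partial_x f(t,x,k)$ of bounded variation in $x$ locally and Carath\'eodory-type measurability in $t$—and simply invoke~\cite[Theorem~2]{Panov_2010_1}.

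The main obstacle I anticipate is the degenerate point $(\dot p(t), v(\rho)) = (0,0)$, where the harmonic mean $\frac{2\dot p\, v(\rho)}{\dot p + v(\rho)}$ is a $0/0$ expression extended by $0$. I would need to check that this extension does not destroy the joint continuity of $f$ and, more delicately, the Lipschitz/BV regularity of $\partial_x f(t,x,k)$ near such points; since $\dot p \geq 0$ and $v \geq 0$ by~\textbf{(v)}--\textbf{(p)}, the quotient is controlled by $\min\{\dot p, v(\rho)\}$ and stays bounded by $V$, so the pathology is confined to the single point and is absorbed by the multiplicative factor $\rho$ together with the bound $\frac{2ab}{a+b} \leq \min\{a,b\}$ for $a,b \geq 0$. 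Confirming that the cited theorem's hypotheses tolerate this isolated degeneracy—rather than requiring global smoothness of the flux in $\rho$—is the one point that genuinely needs care; everything else is a routine transcription of~\textbf{(v)}, \textbf{(p)} and~\textbf{($\boldsymbol{\chi}$)} into Panov's assumptions.
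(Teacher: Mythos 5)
Your overall strategy---delegate existence entirely to \cite[Theorem~2]{Panov_2010_1} and reduce the proof to checking that the flux $f$ of~\eqref{eq:CP1} satisfies its hypotheses---is exactly the paper's. But you guess the wrong list of hypotheses, and the omission is not cosmetic. Panov's Theorem~2 requires: (1)~$f$ is a Carath\'eodory flux, i.e.\ $\rho \mapsto f(t,x,\rho)$ is continuous for a.e.~$(t,x)$ and $(t,x) \mapsto f(t,x,\rho)$ is measurable; (2)~\emph{non-degeneracy}: $\rho \mapsto f(t,x,\rho)$ is not affine on any non-trivial interval; (3)~for some $a,b$ (here $a=0$, $b=1$) one has $f(t,x,a)=f(t,x,b)=0$ for all $(t,x)$, with $(t,x) \mapsto \max_{\rho \in [a,b]} \modulo{f(t,x,\rho)}$ in $\Lloc{q}$ for some $q>2$. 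Your proposal never addresses~(2), yet this is the heart of the matter: Panov's existence result for fluxes with such low $(t,x)$-regularity rests on genuine nonlinearity in $\rho$ (it is what produces strong compactness), and it is precisely here that the strict concavity requirements in~\textbf{(v)} are consumed---in particular the otherwise odd-looking assumption that $\rho \mapsto \frac{\rho \, w \, v(\rho)}{w + v(\rho)}$ be strictly concave for every $w>0$ (cf.~Lemma~\ref{lem:conca}). The paper's proof notes that $\rho \mapsto f(t,x,\rho)$ is a convex combination, with weights $\chi\left(x-p(t)\right)$ and $1-\chi\left(x-p(t)\right)$, of the strictly concave maps $\rho \mapsto \rho\, v(\rho)$ and the harmonic-mean flux, hence strictly concave, hence nowhere affine. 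The conditions you substitute instead (continuity in $(t,x)$ plus local BV/Lipschitz control of $\partial_x f(t,x,k)$) are of the type needed for the Karlsen--Risebro \emph{uniqueness} machinery, not for Panov's existence theorem; with them alone you have not invoked the cited result.

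Two further points. First, your observation that $\mu^k_s$ vanishes because $\chi$ is smooth is correct, but in the paper it belongs to Proposition~\ref{prop:def} (Panov solutions are Kru\v zkov solutions), not to the existence statement, so the main effort of your argument is misallocated. Second, your appeal to a comparison/maximum principle to confine values to $[0,1]$ is circular before existence is established; in Panov's framework the invariance of $[0,1]$ is encoded by condition~(3), which one verifies directly: $f(t,x,0)=0$ since $f = \rho\,\mathcal{V}$, and $f(t,x,1)=0$ since $v(1)=0$ (including the degenerate point via the case definition in~\eqref{eq:CP1}), while the uniform bound on $f$ gives the $\Lloc{q}$ requirement for every $q$. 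Finally, a small slip: the inequality $\frac{2ab}{a+b} \leq \min\{a,b\}$ is reversed (the harmonic mean satisfies $\min\{a,b\} \leq \frac{2ab}{a+b} \leq 2\min\{a,b\}$ for $a,b \geq 0$); the bound $\frac{2ab}{a+b} \leq 2\min\{a,b\}$ is what yields the continuity of $\mathcal{V}$ at $\left(\dot p, v(\rho)\right)=(0,0)$, so your continuity conclusion survives, as in (i) of Lemma~\ref{lem:reg}.
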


Then, we verify that in the case of~\eqref{eq:CP}, Pavov solutions are
also Kru\v zkov solutions.

\begin{proposition}
  \label{prop:def}
  Let~\textbf{(v)}, \textbf{(p)} and~\textbf{($\boldsymbol{\chi}$)}
  hold. Let $\rho_o \in \L\infty (\reali; [0,1])$. If $\rho$ is a
  Panov solution to~\eqref{eq:CP1}--\eqref{eq:CP1} in the sense of
  Definition~\ref{def:Panov}, then $\rho$ is also a Kru\v zkov
  solution to~\eqref{eq:CP}--\eqref{eq:CP1}, in the sense of
  Definition~\ref{def:sol}.
\end{proposition}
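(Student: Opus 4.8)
The plan is to show that a Panov solution satisfies the classical Kružkov inequality \eqref{eq:DefKruzkov}, which amounts to verifying two things: first that the two singular-measure terms involving $\mu_c^k$ and $\mu_s^k$ in \eqref{eq:DefPanov} collapse into the pointwise term $-\sgn(\rho-k)\,\partial_x f(t,x,k)\,\phi$ appearing in \eqref{eq:DefKruzkov}, and second that the extra boundary integral $\int_{\reali}\modulo{\rho_o(x)-k}\,\phi(0,x)\,\d{x}$ in the Panov formulation, together with the fact that test functions in the Kružkov definition are supported away from $t=0$, yields the initial-trace condition \eqref{eq:DefKruzkov0}. The key structural input is the regularity of $(t,x)\mapsto f(t,x,k)$ guaranteed by assumptions \textbf{(v)}, \textbf{(p)} and \textbf{($\boldsymbol{\chi}$)}.

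First I would establish that for each fixed $k$, the map $(t,x)\mapsto \partial_x f(t,x,k)$ has no singular part, i.e.~$\mu_s^k = 0$ and $\mu_c^k$ is in fact an absolutely continuous measure whose density is the classical derivative $\partial_x f(t,x,k)$. To see this, note that $f(t,x,k) = k\,\mathcal{V}(t,x,k)$ and that the only $x$-dependence enters through $\chi(x-p(t))$. Since $\chi \in \Cc1$ by \textbf{($\boldsymbol{\chi}$)}, the map $x\mapsto f(t,x,k)$ is Lipschitz, indeed $\C1$ in $x$, with $\partial_x f(t,x,k) = k\,\chi'(x-p(t))\bigl[\tfrac{2\dot p(t)\,v(k)}{\dot p(t)+v(k)} - v(k)\bigr]$ (with the convention from \eqref{eq:CP1} at the degenerate point). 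Consequently the distributional derivative is represented by an $\L\infty_{loc}$ function, so its singular part vanishes and its continuous part coincides with the pointwise derivative. This is the step that makes the two Panov measure-terms reduce exactly to $-\int\!\!\int \sgn(\rho-k)\,\partial_x f(t,x,k)\,\phi\,\d{x}\,\d{t}$, matching the third line of \eqref{eq:DefKruzkov}.

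With that reduction in hand, the Panov inequality \eqref{eq:DefPanov} for test functions $\phi \in \Cc\infty(\pint{\reali}^+\times\reali;\reali^+)$ — which vanish near $t=0$, so the boundary term drops — becomes precisely the Kružkov inequality \eqref{eq:DefKruzkov}. It then remains to recover the initial-data condition \eqref{eq:DefKruzkov0}. For this I would exploit the boundary integral in \eqref{eq:DefPanov}: testing with functions $\phi$ that do \emph{not} vanish at $t=0$ and comparing with the entropy inequalities gives the standard control on the initial trace. The argument is the classical one showing that a Panov/entropy solution assumes its initial datum in the $\L1$ sense along almost every sequence of times; one chooses $k$ suitably and uses the boundedness $\rho, k \in [0,1]$ together with the finite speed of propagation implied by the Lipschitz flux to conclude \eqref{eq:DefKruzkov0}.

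The main obstacle I anticipate is the careful handling of the degenerate branch in \eqref{eq:CP1} where $(\dot p(t), v(\rho)) = (0,0)$, and more generally verifying that the harmonic-mean interpolation does not introduce any spurious singularity in $x$ that would create a nonzero $\mu_s^k$. Because the $x$-dependence is entirely carried by the smooth, compactly supported factor $\chi$, and the remaining factors depend on $x$ only through $p(t)$ (hence are $x$-independent once $t$ is fixed), I expect the flux to be genuinely $\C1$ in $x$ for each $(t,k)$, so that $\mu_s^k = 0$ robustly; the delicate point is merely to confirm this uniformly in $k\in[0,1]$ and to check measurability in $t$ so that the reduction of the measure terms is rigorous. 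Once the vanishing of the singular part is secured, the passage from Panov to Kružkov is essentially a matching of terms.
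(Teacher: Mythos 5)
Your proposal is correct and takes essentially the same route as the paper: the paper obtains $\mu^k_s = 0$ from the uniform Lipschitz continuity of $x \to \mathcal{V} (t,x,\rho)$ (point~(ii) of Lemma~\ref{lem:reg}), which is precisely the explicit computation with $\chi\left(x - p (t)\right)$ you carry out, and then restricts to test functions supported in $\pint{\reali}^+ \times \reali$ to drop the boundary term, so that~\eqref{eq:DefPanov} reduces to~\eqref{eq:DefKruzkov}. For the initial condition~\eqref{eq:DefKruzkov0} the paper simply cites~\cite[Formula~(10)]{Panov_2010_1}, which is the same standard trace argument you sketch, so no genuine gap remains.
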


We are thus left with the task of proving the uniqueness of Kru\v zkov
solution and its stability with respect to the initial datum. Note
that~\cite[Theorem~1.1]{KarlsenRisebro2003} almost applies to the
present case, see Lemma~\ref{lem:Fregatura} for details.

\begin{proposition}
  \label{prop:Uni}
  Let~\textbf{(v)}, \textbf{(p)} and~\textbf{($\boldsymbol{\chi}$)}
  hold. Let $\rho_o \in \L\infty (\reali; [0,1])$. Then,
  problem~\eqref{eq:CP}--\eqref{eq:CP1} admits at most a unique Kru\v
  zkov solution in the sense of Definition~\ref{def:sol}. Moreover, if
  $\rho_o' \in \L\infty (\reali; [0,1])$ is another initial datum and
  $\rho' = \rho' (t,x)$ is the corresponding solution, the following
  Lipschitz estimate holds:
  \begin{equation}
    \label{eq:Lip}
    \norma{\rho' (t) - \rho (t)}_{\L1 (\reali; \reali)}
    \leq
    e^{C\,t} \norma{\rho_o' - \rho_o}_{\L1 (\reali; \reali)}
  \end{equation}
  where $C$ is defined in~\eqref{eq:C}, independent from $\rho_o$ and
  $\rho_o'$.
\end{proposition}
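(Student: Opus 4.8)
The plan is to prove Proposition~\ref{prop:Uni} by reducing the problem to a known doubling-of-variables argument for conservation laws with discontinuous or space-time dependent flux. The statement asserts two things at once: uniqueness of Kru\v zkov solutions and the contractive Lipschitz estimate~\eqref{eq:Lip}. Since uniqueness follows from the stability estimate applied with $\rho_o' = \rho_o$ (which forces $\rho'(t) = \rho(t)$ in $\L1$ for all $t$), the real content is establishing~\eqref{eq:Lip}. So I would focus entirely on the $\L1$-contraction estimate and get uniqueness as a corollary.

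First I would verify that the flux $f(t,x,\rho) = \rho\,\mathcal{V}(t,x,\rho)$ from~\eqref{eq:CP1} has the regularity needed to invoke the stability theory of~\cite{KarlsenRisebro2003}. Under assumptions~\textbf{(v)}, \textbf{(p)} and~\textbf{($\boldsymbol{\chi}$)}, the map $\rho \to f(t,x,\rho)$ is Lipschitz uniformly in $(t,x)$, and the $(t,x)$-dependence enters only through $\chi(x-p(t))$ and $\dot p(t)$, both of which are controlled: $\chi \in \Cc1$ and $p \in \C{1,1}$, so $\dot p$ is Lipschitz. The key quantity governing the estimate is the total variation or Lipschitz modulus of $(t,x) \to \partial_\rho f(t,x,k)$, or more precisely the spatial variation of $x \to f(t,x,k)$ for fixed $k$; I would extract from these smoothness hypotheses a uniform bound and use it to define the constant $C$ referenced as~\eqref{eq:C}. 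I expect this to be where the excerpt's forward reference Lemma~\ref{lem:Fregatura} does the work: the excerpt explicitly flags that \cite[Theorem~1.1]{KarlsenRisebro2003} \emph{almost} applies, so the plan is to isolate in that lemma exactly which hypothesis of the cited theorem is not literally met and to patch it for the present flux.

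The core of the argument is the standard Kru\v zkov doubling of variables. I would take two Kru\v zkov solutions $\rho$ and $\rho'$ (for data $\rho_o$, $\rho_o'$), write the entropy inequality~\eqref{eq:DefKruzkov} for each with a test function depending on both $(t,x)$ and $(s,y)$, set $k = \rho'(s,y)$ in the first and $k=\rho(t,x)$ in the second, add, and pass to the diagonal with a mollified test function. The crucial simplification afforded by this model is that the entropy flux is genuinely of Kru\v zkov type in the $\rho$-variable, so the usual cross terms combine into $\sgn(\rho-\rho')\bigl(f(t,x,\rho)-f(t,x,\rho')\bigr)$ plus an error term measuring the mismatch in the explicit $(t,x)$-dependence of $f$. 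Controlling that error term is where the $\partial_x f(t,x,k)$ contributions enter, and the resulting Gr\"onwall inequality produces the exponential factor $e^{Ct}$ with $C$ capturing the Lipschitz constant of $f$ in $\rho$ times the spatial-variation bound on the flux.

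The main obstacle I anticipate is the mismatch between the cited theorem's structural hypotheses and the flux~\eqref{eq:CP1}. Theorems such as \cite[Theorem~1.1]{KarlsenRisebro2003} typically require the flux to be written as $f(t,x,\rho) = g\bigl(k(t,x),\rho\bigr)$ for a coefficient $k$ of bounded variation, with monotonicity or genuine-nonlinearity conditions, whereas here the $(t,x)$-dependence is nonlinearly entangled through the harmonic-mean interpolation $2\dot p(t)\,v(\rho)/(\dot p(t)+v(\rho))$. The plan is therefore to confine all the verification of these structural conditions to Lemma~\ref{lem:Fregatura} — checking the regularity of $x \to f(t,x,k)$, the strict concavity guaranteed by~\textbf{(v)} (so that the flux is genuinely nonlinear and no extra interface entropy conditions are needed), and the sign condition $\dot p \geq 0$ from~\textbf{(p)} — and then to invoke the theorem as a black box for the contraction, reading off the precise value of $C$ in terms of $\Lip(v)$, $\norma{\chi}$, $\norma{\dot p}$ and the $\C{1,1}$ bound on $p$.
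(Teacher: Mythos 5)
Your plan hinges on verifying the structural hypotheses of \cite[Theorem~1.1]{KarlsenRisebro2003} inside Lemma~\ref{lem:Fregatura} and then ``invoking the theorem as a black box for the contraction, reading off the precise value of $C$.'' This cannot be carried out, and it inverts the actual role of Lemma~\ref{lem:Fregatura}: in the paper that lemma is a \emph{negative} result. It shows that while conditions (1)--(4) of the cited theorem do hold for the flux~\eqref{eq:CP1} (regularity, genuine nonlinearity, Lipschitz dependence on $\rho$ --- the items you propose to check), the one-sided Lipschitz condition \cite[Formula~(1.7)]{KarlsenRisebro2003}, i.e.\ condition~(5), genuinely \emph{fails}. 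The failure is not where you locate it (the $g\bigl(k(t,x),\rho\bigr)$ BV-coefficient structure or missing monotonicity); it is caused by the \emph{time} dependence through $\dot p$: taking $v(\rho)=1-\rho$, $p(t)=t$, $t_1=0$, $t_2=2$, $x_1=0$, $x_2=\epsilon$, $\rho_1=1/2$, $\rho_2=0$, condition~(5) would force $\epsilon/6 \leq C\,\epsilon^2$ for all small $\epsilon>0$, which is impossible. So there is no theorem to invoke and no constant to read off; the ``patch'' you defer to Lemma~\ref{lem:Fregatura} is precisely the content your proposal never supplies.

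What the paper does instead is rerun the entire doubling-of-variables argument (your middle paragraph sketches this framework correctly, including that uniqueness follows from the stability estimate) and replace the failing condition~(5) by a new estimate tailored to this flux: Lemma~\ref{lem:speriamo}, the mixed second-difference bound $\modulo{g (\rho_1,q_1) - g (\rho_1,q_2) - g (\rho_2,q_1) + g (\rho_2,q_2)} \leq L \, \modulo{\rho_1 - \rho_2} \, \modulo{q_1 - q_2}$ for $g (\rho,q) = q\,\rho\, v (\rho)/\bigl(q + v (\rho)\bigr)$, proved via a uniform bound on $\partial^2_{\rho q} g$. Applied with $q_1 = \dot p (t)$, $q_2 = \dot p (s)$, and combined with the Lipschitz continuity of $\dot p$ --- this is exactly where the $\C{1,1}$ hypothesis in \textbf{(p)} is used --- it gives $\modulo{g\bigl(\rho_1,\dot p (t)\bigr) - g\bigl(\rho_2,\dot p (t)\bigr) - g\bigl(\rho_1,\dot p (s)\bigr) + g\bigl(\rho_2,\dot p (s)\bigr)} \leq \Lip (g)\,\Lip (\dot p)\,\modulo{t-s}\,\modulo{\rho_1-\rho_2}$, which controls the critical cross term $\bar I_5$ multiplying $\partial_x \omega_r$; in the limit $r \to 0$ this produces the Gronwall term with the explicit constant $C$ of~\eqref{eq:C}, while the remaining terms $\bar I_0, \bar I_1, \bar I_3$ are estimated as in \cite{KarlsenRisebro2003}. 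Without identifying which hypothesis fails and supplying this substitute bilinear estimate, your argument stalls exactly at the step the black-box theorem was supposed to handle.
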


Together, the above propositions give our main result.

\begin{theorem}
  \label{thm:main}
  Let~\textbf{(v)}, \textbf{(p)} and~\textbf{($\boldsymbol{\chi}$)}
  hold. Then, for any $\rho_o \in \L\infty (\reali; [0,1])$,
  problem~\eqref{eq:CP}--\eqref{eq:CP1} admits a unique {Kru\v zkov}
  solution in the sense of Definition~\ref{def:sol} and the Lipschitz
  continuity estimate~\eqref{eq:Lip} holds.
\end{theorem}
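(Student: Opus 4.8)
The plan is to assemble Theorem~\ref{thm:main} directly from the three propositions already established, since each proposition supplies exactly one of the three ingredients (existence, the equivalence of solution concepts, and uniqueness with stability) needed for well posedness. Let \textbf{(v)}, \textbf{(p)} and \textbf{($\boldsymbol{\chi}$)} hold and fix $\rho_o \in \L\infty (\reali; [0,1])$.

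First I would invoke Proposition~\ref{prop:exi} to obtain a Panov solution $\rho$ to~\eqref{eq:CP}--\eqref{eq:CP1} in the sense of Definition~\ref{def:Panov}. The hypotheses of Proposition~\ref{prop:exi} are precisely the standing assumptions on $v$, $p$ and $\chi$ together with $\rho_o \in \L\infty (\reali; [0,1])$, so this step is immediate and yields at least one solution in the weaker Panov sense.

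Next I would upgrade this to a Kru\v zkov solution by applying Proposition~\ref{prop:def}: under the same assumptions, every Panov solution to~\eqref{eq:CP}--\eqref{eq:CP1} is also a Kru\v zkov solution in the sense of Definition~\ref{def:sol}. Combining this with the previous step produces a Kru\v zkov solution, so existence is settled. For uniqueness and continuous dependence, I would then appeal to Proposition~\ref{prop:Uni}, which guarantees that problem~\eqref{eq:CP}--\eqref{eq:CP1} admits at most one Kru\v zkov solution and that any two solutions emanating from initial data $\rho_o$ and $\rho_o'$ satisfy the Lipschitz estimate~\eqref{eq:Lip} with the constant $C$ from~\eqref{eq:C}. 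Taken together, existence from the first two propositions and uniqueness plus stability from the third give exactly the assertion of the theorem.

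There is no genuine obstacle at this level: the theorem is a pure synthesis of results whose proofs have been deferred to Section~\ref{sec:TD}. The only point requiring a moment of care is logical bookkeeping, namely checking that the three propositions are stated under identical hypotheses so that the Panov solution produced by Proposition~\ref{prop:exi} is admissible as input to Proposition~\ref{prop:def}, and that the resulting Kru\v zkov solution falls within the uniqueness class of Proposition~\ref{prop:Uni}. Since all three share the assumptions \textbf{(v)}, \textbf{(p)}, \textbf{($\boldsymbol{\chi}$)} and $\rho_o \in \L\infty (\reali; [0,1])$, this compatibility is automatic, and the chain closes without additional work.
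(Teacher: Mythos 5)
Your proposal is correct and coincides with the paper's own argument: the authors prove Theorem~\ref{thm:main} exactly by chaining Proposition~\ref{prop:exi} (existence of a Panov solution), Proposition~\ref{prop:def} (every Panov solution is a Kru\v zkov solution), and Proposition~\ref{prop:Uni} (uniqueness and the estimate~\eqref{eq:Lip}). Your additional check that all three propositions share the hypotheses \textbf{(v)}, \textbf{(p)}, \textbf{($\boldsymbol{\chi}$)} and $\rho_o \in \L\infty(\reali;[0,1])$ is exactly the bookkeeping the paper leaves implicit.
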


\noindent The proof directly follows from propositions~\ref{prop:exi},
\ref{prop:def} and~\ref{prop:Uni}.

\section{Numerical Integrations}
\label{sec:A}

To numerically integrate the model~\eqref{eq:CP}--\eqref{eq:CP1} we
use the standard Lax-Friedrichs algorithm,
see~\cite[Section~12.1]{LeVeque}. Throughout, we use a fixed space
mesh size $\Delta x = 2.5\times 10^{-3}$.

As a first example, in Figure~\ref{fig:questa}, we choose the speed
law
\begin{equation}
  \label{eq:5}
  v (\rho ) = 1-\rho
\end{equation}
and the constant initial datum
\begin{equation}
  \label{eq:4}
  \rho_o (x) = 0.5 \,.
\end{equation}
Both at the analytic and at the numeric levels, the extension
of~\eqref{eq:CP}--\eqref{eq:CP1} to more than one measuring vehicle is
immediate. Here, we consider that two cars $p$ and $q$, which we
imagine equipped with a GPS measuring device, follow trajectories with
the same speed but exiting different initial position, say
\begin{equation}
  \label{eq:pqt}
  \dot p(t) = \dot q (t) =
  0.5 \, \caratt{[0,5[}(t)
  +
  0.6 \, \caratt{[5,6]}(t)
  +
  0.2 \, \caratt{[8,11]}(t)
  +
  0.4 \, \caratt{[13,18]}(t)\,,
  \qquad
  \begin{array}{@{}rcl@{}}
    p (0) & = & 0 \,,
    \\
    q (0) & = & 2 \,.
  \end{array}
\end{equation}
In the time interval $[0, \, 5]$, the speeds of $p$ and $q$ equal that
resulting from the LWR model at the initial
density~\eqref{eq:4}.\begin{figure}[!h] \centering
  \includegraphics[width=0.7\textwidth]{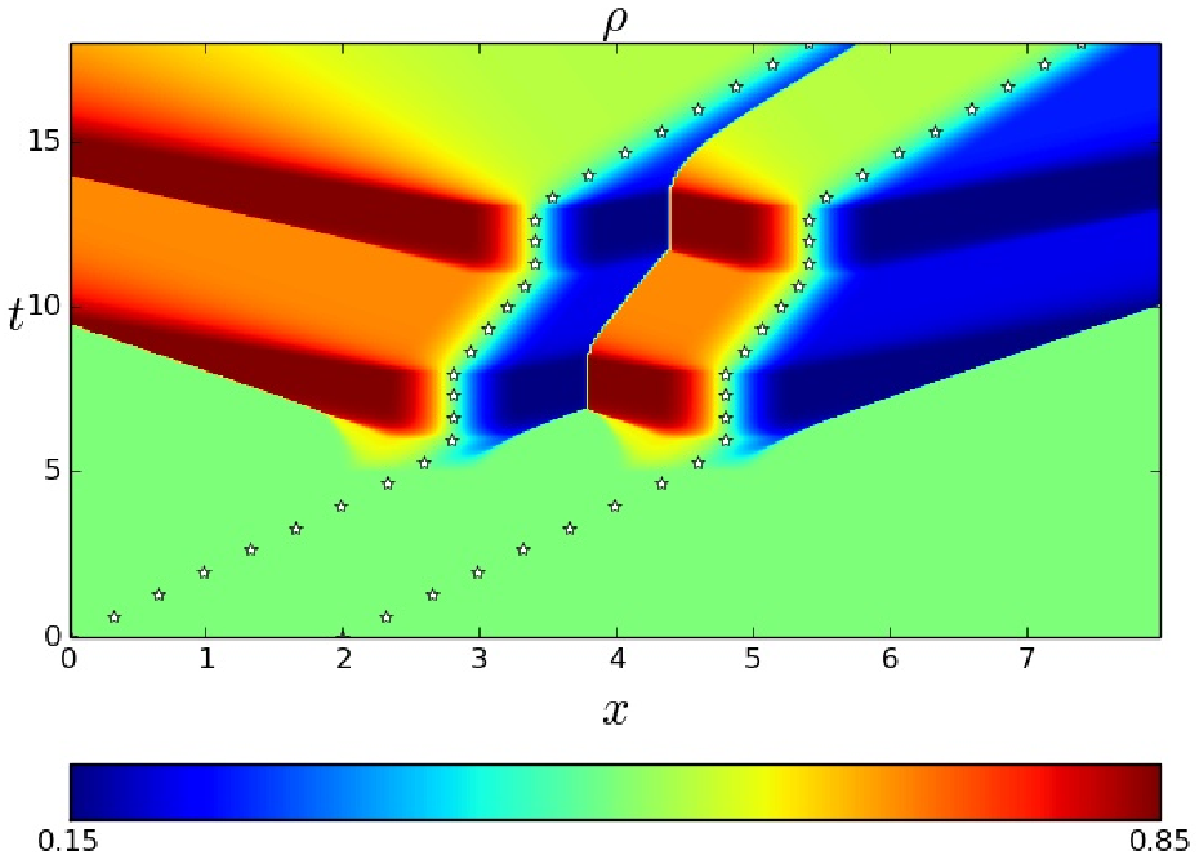}
  \Caption{Numerical integration of the
    model~\eqref{eq:CP}--\eqref{eq:CP1}--\eqref{eq:4}--(\ref{eq:pqt}). For
    $t \in [0, \, 5]$, the knowledge of the trajectories of $p$ and
    $q$ has no effects on the LWR description. For $t >5$, the effects
    of the jams revealed by the two cars are
    evident.\label{fig:questa}}
\end{figure}
Therefore, the two measuring cars have no effect whatsoever on the
evolution prescribed by the partial differential equation, see
Figure~\ref{fig:questa}.

The model allows to observe queues unpredictable for the LWR model,
thanks to the (supposedly) real time data provided by $p$
and $q$. Behind the measuring cars, the maximal density is reached,
while in front of them the road empties. Later, the jams
disappear. Between the two cars, the queue behind $q$ interacts with
the rarefaction formed in front of $p$.

\begin{figure}[!h] \centering
  \includegraphics[width=0.7\textwidth]{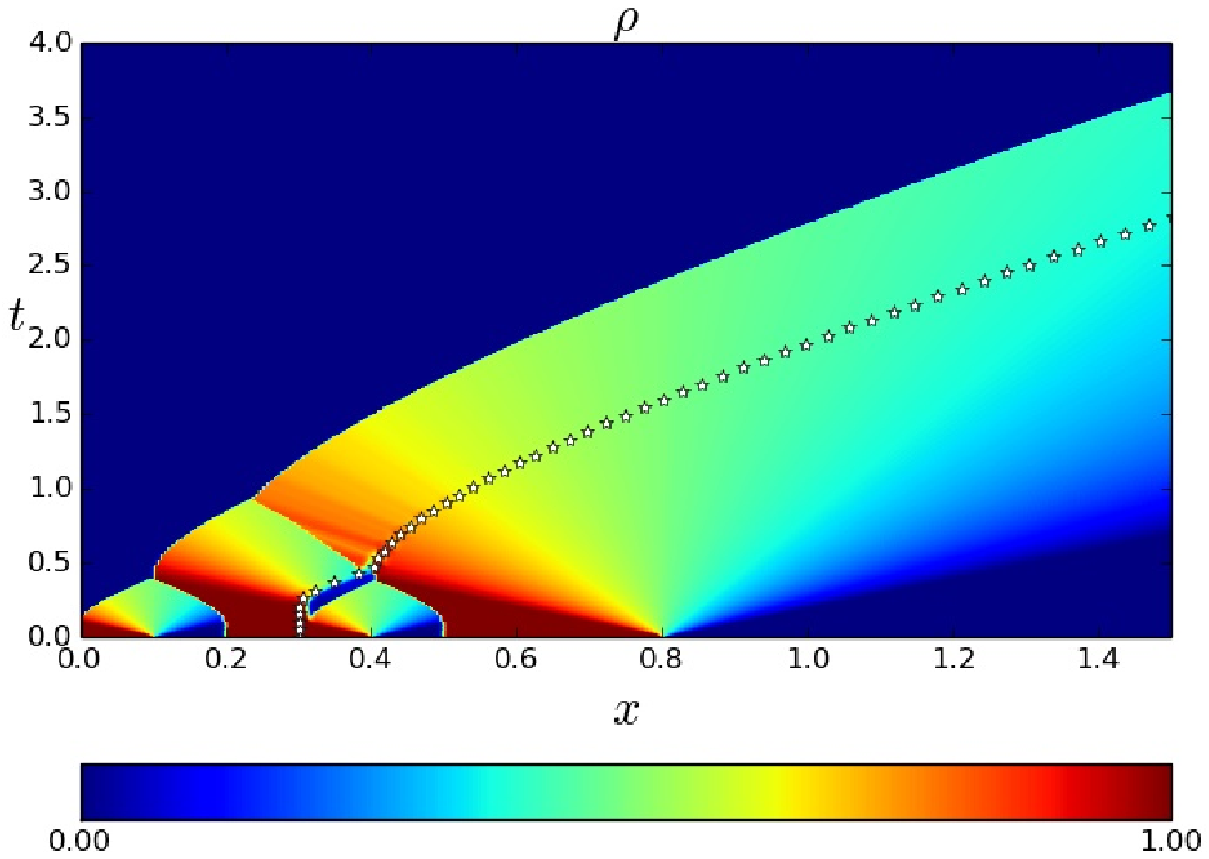}
  \Caption{Numerical integration
    of~\eqref{eq:CP}--\eqref{eq:CP1}--\eqref{eq:ID} with $\dot p =
    v\left(\rho (t, p (t)+)\right)$, resulting in the usual
    LWR evolution.\label{fig:int3}}
\end{figure}
As a second example, we choose the non constant initial datum
\begin{equation}
  \label{eq:ID}
  \rho_o(x)
  =
  \caratt{[0.001,0.1]}(x)
  +
  \caratt{[0.2,0.4]}(x)
  +
  \caratt{[0.5,0.8]}(x)
\end{equation}
for~\eqref{eq:CP} and assign to the unique measuring vehicle $p$ the
speed resulting from the speed law~\eqref{eq:5}, see
Figure~\ref{fig:int3}. The evolution prescribed by~\eqref{eq:3} is the
same as the one provided by the LWR model.

\begin{figure}[!h]
  \centering
  \includegraphics[width=0.7\textwidth]{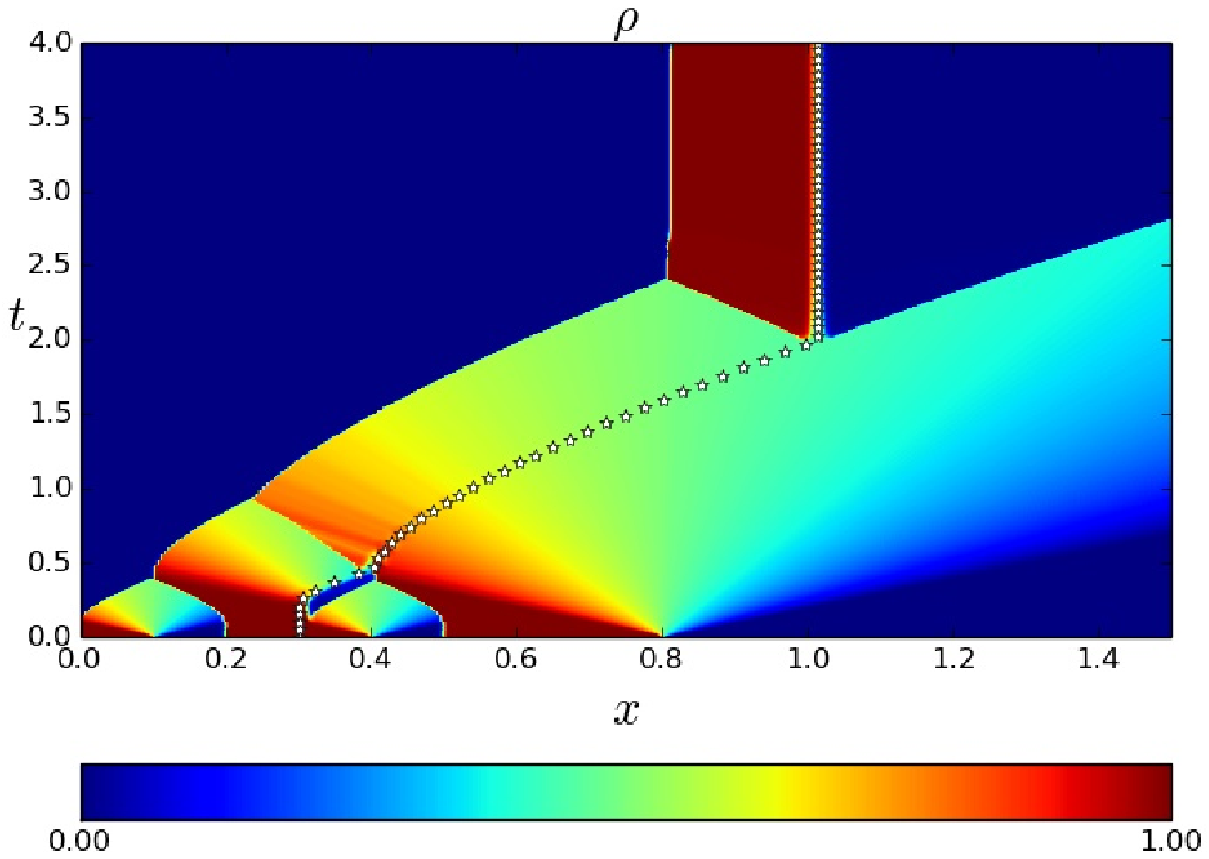}
  \Caption{Here, the experimental data are very different from the
    predictions of the LWR model. The measuring vehicle $p$ moves
    according to $\dot p = v\left(\rho (t, p (t)+)\right)$
    for $t \in [0, \, 2.0]$ and stops at $t=2.0$ due to, say, an
    accident. The model~\eqref{eq:CP}--\eqref{eq:CP1}--\eqref{eq:ID}
    is able to describe the resulting queue.\label{fig:int32}}
\end{figure}
Assume now that at time $t = 2.0$ the measuring car stops, due for
instance to some sort of accident. Then, the equation~\eqref{eq:3}
displays the formation of a standing queue, see
Figure~\ref{fig:int32}.

\begin{figure}[!h]
  \centering
  \includegraphics[width=0.7\textwidth]{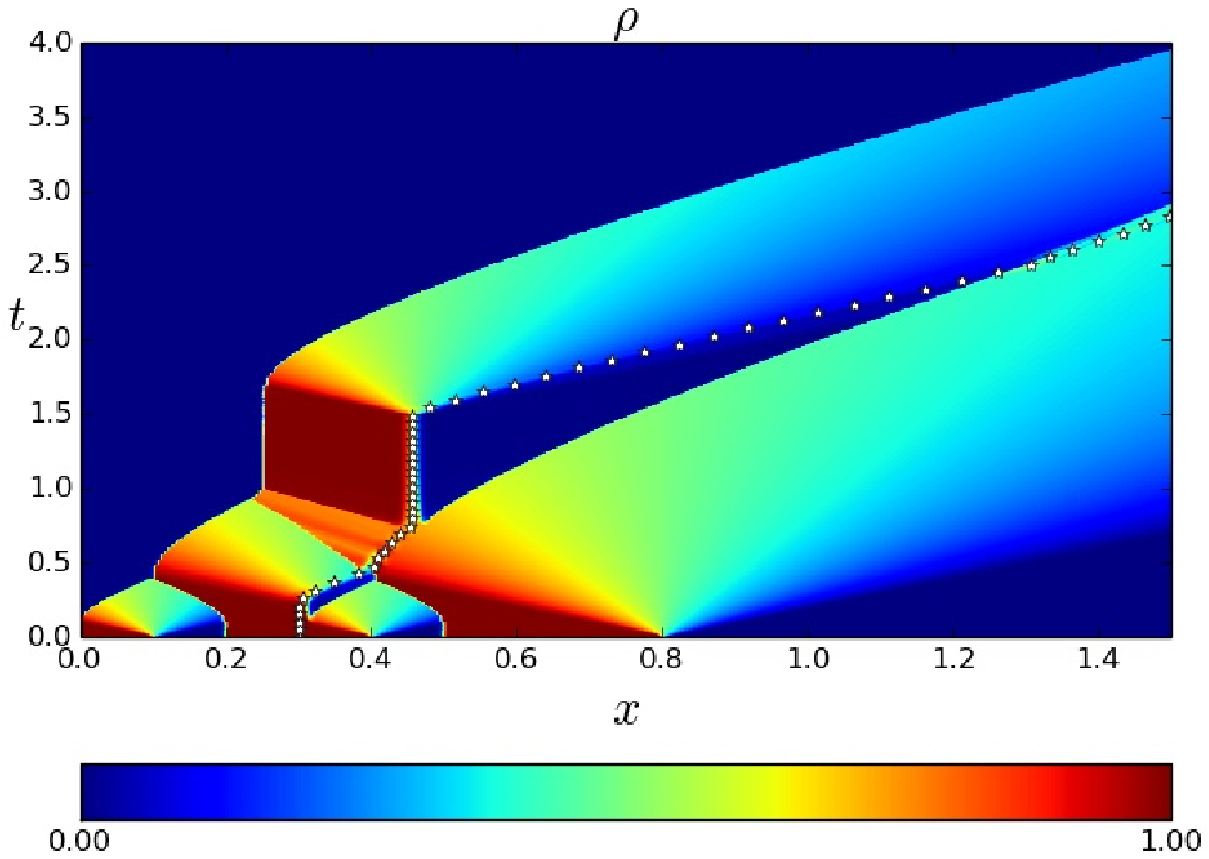}
  \Caption{The measuring vehicle $p$ moves according to $\dot p =
    v\left(\rho (t, p (t)+)\right)$ for $t \in [0, \,
    0.75]$ and stops for $t\in [0.75, \, 1.50]$.\label{fig:int33}}
\end{figure}

A slightly different situation is in Figure~\ref{fig:int33}. Here, the
measuring vehicle travels according to $\dot p = v\left(\rho \left(t,
    p (t)+\right)\right)$ for $t \in [0, \, 0.75]$. Then, the GPS data
show the presence of a standing queue at the location of $p$ during
the time interval $[0.75, \, 1.50]$. For $t > 1.50$, the measuring
vehicle travels again coherently with the prescription of the LWR
model: first at the maximal speed and then, after about $t \approx
2.50$, slowing down due to its reaching the vehicles in front.

The last two examples displayed in figures~\ref{fig:int32}
and~\ref{fig:int33}, when compared with Figure~\ref{fig:int3}, show
the dramatic changes in the description of traffic due to the
exploitation of real data. Anticipating the place and moment of an
accident is not possible, but within the framework
of~\eqref{eq:CP}--\eqref{eq:CP1} taking into account its consequences
is practically feasible.

\section{Technical Details}
\label{sec:TD}

\begin{proofof}{Lemma~\ref{lem:1}}
  We follow the main ideas of the proof
  of~\cite[Proposition~2.3]{Colomboguerra2010}.

  First, following~\cite[Chapter~6]{BressanLectureNotes} we use the
  wave front algorithm to obtain a sequence of piecewise constant
  approximate solutions to~\eqref{eq:lem1}.

  Fix $n \in \naturali \setminus\{0\}$ and call $f_n \in \C{0,1}
  (\reali; \reali)$, with $\Lip (f_n) \leq \Lip (f)$, the piecewise
  linear and continuous function such that $f_n (\rho) = f (\rho)$ for
  all $\rho \in 2^{-n} \interi$. Approximate the initial datum
  $\rho_o$ with a piecewise constant $\rho_o^n \in (\L1 \cap \BV)
  (\reali; \reali)$ such that $\rho_o^n (\reali) \subseteq
  2^{-n}\interi$ and $\tv (\rho_o^n) \leq \tv (\rho_o)$. Call $\rho^n$
  the exact solution to
  \begin{displaymath}
    \left\{
      \begin{array}{l}
        \partial_t \rho^n + \partial_x \left(f^n (\rho^n)\right) = 0
        \\
        \rho^n (0,x) = \rho_o^n (x)
      \end{array}
    \right.
  \end{displaymath}
  obtained gluing the solutions to the Riemann problems at the points
  of jump in $\rho_o^n$, see~\cite[Chapter~6]{BressanLectureNotes},
  Following~\cite[Lemma~4.4]{Colomboguerra2010}, locally the map $t
  \to \rho^n\left(t, \gamma_1 (t)\right)$ can be written as
  \begin{equation}
    \label{eq:lem1:rho}
    \rho^n \left(t, \gamma_1 (t)\right)
    =
    \sum_\alpha \rho_\alpha \chi_{[t_\alpha, t_{\alpha+1}[} (t)
    \quad \mbox{ with } \quad
    \gamma_1 (t_\alpha) = \lambda_\alpha \, t_\alpha + x_\alpha \,,
  \end{equation}
  where $t \to \lambda_\alpha \, t + x_\alpha$ supports a
  discontinuity in $\rho^n$ crossed by $\gamma_1$. Here, we intend
  that all states attained by $\rho^n$ in a neighborhood of
  $(t_\alpha,x_\alpha)$ appear in the sum \eqref{eq:lem1:rho},
  possibly with $t_{\alpha+1} = t_\alpha$.

  If $\eta \in \C1 (\reali; \reali)$ is such that $\norma{\eta}_{\C1}$
  is sufficiently small, there exists times $t'_\alpha$ such that
  \begin{equation}
    \label{eq:lem1:rhon}
    \rho^n \left(t, \gamma_1 (t)+\eta (t)\right)
    =
    \sum_\alpha \rho_\alpha \chi_{[t'_\alpha, t'_{\alpha+1}[} (t)
    \quad \mbox{ with } \quad
    \gamma_1 (t'_\alpha) = \lambda_\alpha \, t'_\alpha + x_\alpha \,.
  \end{equation}
  Hence
  \begin{eqnarray*}
    \left[\gamma_1 (t'_\alpha) + \eta (t'_\alpha)\right]
    -
    \gamma_1 (t_\alpha)
    & = &
    \lambda_\alpha (t'_\alpha - t_\alpha)
    \qquad \qquad \qquad \mbox{ on the other hand}
    \\
    \left[\gamma_1 (t'_\alpha) + \eta (t'_\alpha)\right]
    -
    \gamma_1 (t_\alpha)
    & = &
    \left[\gamma_1 (t'_\alpha) - \gamma_1 (t_\alpha)\right]
    +
    \eta (t'_\alpha)
    \\
    & = &
    \int_0^1 \dot
    \gamma_1 \left(\theta \, t'_\alpha + (1-\theta)t_\alpha\right) \d\tau
    \, (t'_\alpha - t_\alpha)
    + \eta (t'_\alpha)
  \end{eqnarray*}
  so that
  \begin{eqnarray*}
    \lambda_\alpha (t'_\alpha - t_\alpha)
    & = &
    \int_0^1 \dot
    \gamma_1 \left(\theta \, t'_\alpha + (1-\theta)t_\alpha\right) \d\tau
    \, (t'_\alpha - t_\alpha)
    + \eta (t'_\alpha)
    \\
    t'_\alpha - t_\alpha
    & = &
    \frac{\eta (t'_\alpha)}{\lambda_\alpha
      -
      \int_0^1 \dot
      \gamma_1 \left(\theta \, t'_\alpha + (1-\theta)t_\alpha\right) \d\tau}
    \\
    \modulo{t'_\alpha - t_\alpha}
    & = &
    \frac{\modulo{\eta (t'_\alpha)}}{\modulo{\lambda_\alpha
        -
        \int_0^1 \dot
        \gamma_1 \left(\theta \, t'_\alpha + (1-\theta)t_\alpha\right) \d\tau}}
    \\
    \modulo{t'_\alpha - t_\alpha}
    & \leq &
    \frac{1}{c} \, \norma{\eta}_{\C0} \,.
  \end{eqnarray*}
  Integrating the modulus of the difference between the
  terms~\eqref{eq:lem1:rho} and~\eqref{eq:lem1:rhon}, we obtain a
  first Lipschitz type estimate:
  \begin{eqnarray}
    \nonumber
    \int_0^T
    \modulo{\rho^n \left(t, \gamma_1 (t)\right)
      -
      \rho^n\left(t, \gamma_1 (t) + \eta (t)\right)} \d{t}
    & = &
    \sum_\alpha
    \modulo{\rho_\alpha - \rho_{\alpha-1}} (t_\alpha - t'_\alpha)
    \\
    \nonumber
    & \leq &
    \frac{1}{c} \, \norma{\eta}_{\C0}
    \sum_\alpha
    \modulo{\rho_\alpha - \rho_{\alpha-1}}
    \\
    \nonumber
    & \leq &
    \frac{1}{c} \, \norma{\eta}_{\C0}
    \tv \left\{\rho^n \left(\cdot, \gamma_1 (\cdot)\right), [0,T]\right\}
    \\
    \nonumber
    & \leq &
    \frac{1}{c} \, \norma{\eta}_{\C0}
    \tv (\rho_o^n)
    \\
    \label{eq:lem1:theta}
    & \leq &
    \frac{1}{c} \, \norma{\eta}_{\C0}
    \tv (\rho_o)
  \end{eqnarray}
  The proof is now completed as that
  of~\cite[Lemma~4.4]{Colomboguerra2010}. Introduce $\psi \colon [0,1]
  \to \reali$ by
  \begin{displaymath}
    \psi (\theta)    =
    \int_0^T
    \modulo{
      \rho^n
      \left(
        t, \theta \, \gamma_2 (t)
        +
        (1-\theta) \gamma_1 (t)
      \right)
    }
    \d{t}
  \end{displaymath}
  and observe that the above estimate~\eqref{eq:lem1:theta} ensures
  that $\psi$ is locally Lipschitz continuous and moreover
  $\modulo{\dot \psi} \leq \frac{1}{c} \, \tv (\rho_o) \,
  \norma{\gamma_2 - \gamma_1}_{\C0 ([0,T]; \reali)}$. Finally,
  \begin{eqnarray*}
    \int_0^T
    \modulo{
      \rho^n\left(t, \gamma_2 (t)\right) - \rho^n (t, \gamma_1 (t))} \d{t}
    & = &
    \psi (1) - \psi (0)
    \\
    & \leq &
    \norma{\dot \psi}_{\L\infty ([0,1]; \reali)}
    \\
    & \leq &
    \frac{1}{c} \,
    \tv (\rho_o) \,
    \norma{\gamma_2 - \gamma_1}_{\C0 ([0,T]; \reali)} \,.
  \end{eqnarray*}
  Thanks to the convergence of $\rho^n$ to $\rho$, an application of
  Lebesgue Dominated Convergence theorem completes the proof.
\end{proofof}

\begin{proofof}{Proposition~\ref{prop:positive}}
  Let $\rho_{\strut V}$ solve $\partial_t \rho + \partial_x \big(\rho
    \, v_{\strut V} (\rho)\big)=0$. Then, $\rho_{\strut V_2} (t,x) =
  \rho_{\strut V_1} \! \left(t, \frac{V_1}{V_2} \, x\right)$.  Indeed,
  \begin{displaymath}
    \partial_t \rho_{\strut V_2} (t,x)
    =
    \partial_t \rho_{\strut V_1} \left(t, \frac{V_1}{V_2} \, x\right)
    \quad \mbox{and} \quad
    \partial_x \rho_{\strut V_2} (t,x)
    =
    \frac{V_1}{V_2} \;
    \partial_x \rho_{\strut V_1} \left(t, \frac{V_1}{V_2} \, x\right)
  \end{displaymath}
  so that, setting $f_V (\rho) = \rho \, V (1-\rho)$, we have that
  $f_{V_2} = \frac{V_2}{V_1} \, f_{V_1}$ and
  \begin{eqnarray*}
    & &
    \partial_t \rho_{\strut V_2} (t,x)
    +
    \partial_x \left(f_{V_2}\left(\rho_{\strut V_2} (t,x)\right)\right)
    \\
    & = &
    \partial_t \rho_{\strut V_2} (t, x)
    +
    f'_{V_2} \left(\rho_{\strut V_2} (t,x)\right)
    \partial_x \rho_{\strut V_2} (t,x)
    \\
    & = &
    \partial_t \rho_{\strut V_1} \left(t, \frac{V_1}{V_2} \, x\right)
    +
    \frac{V_2}{V_1}
    f'_{V_1}\left(\rho_{\strut V_1} \left(t, \frac{V_1}{V_2} \, x\right) \right)
    \;
    \frac{V_1}{V_2} \;
    \partial_x \rho_{\strut V_1} \left(t, \frac{V_1}{V_2} \, x\right)
    \\
    & = &
    \partial_t \rho_{\strut V_1} \left(t, \frac{V_1}{V_2} \, x\right)
    +
    f'_{V_1}\left(\rho_{\strut V_1} \left(t, \frac{V_1}{V_2} \, x\right) \right)
    \;
    \partial_x \rho_{\strut V_1} \left(t, \frac{V_1}{V_2} \, x\right)
    \\
    & = &
    \partial_t \rho_{\strut V_1} \left(t,\frac{V_1}{V_2} \,x\right)
    +
    \partial_x \left(f_{V_1}\left(\rho_{\strut V_1} \left(t,\frac{V_1}{V_2} \,x\right) \right)\right)
    \\
    & = &
    0\, .
  \end{eqnarray*}
  We are lead to consider
  \begin{eqnarray*}
    \modulo{\mathcal{E} (V_2) - \mathcal{E} (V_1)}
    & = &
    \modulo{
      \int_0^T
      \modulo{
        \dot p (t)
        -
        v_{V_2}\left(\rho_{\strut V_2}\left(t, p (t)\right)\right)}
      \d{t}
      -
      \int_0^T
      \modulo{
        \dot p (t)
        -
        v_{V_1}\left(\rho_{\strut V_1}\left(t, p (t)\right)\right)}
      \d{t}
    }
    \\
    & \leq &
    \modulo{
      \int_0^T
      \modulo{
        v_{V_2}\left(\rho_{\strut V_2}\left(t, p (t)\right)\right)
        -
        v_{V_1}\left(\rho_{\strut V_1}\left(t, p (t)\right)\right)}
      \d{t}
    }
    \\
    & \leq &
    \modulo{
      \int_0^T
      \modulo{
        v_{V_2}\left(\rho_{\strut V_2}\left(t, p (t)\right)\right)
        -
        v_{V_2}\left(\rho_{\strut V_1}\left(t, p (t)\right)\right)}
      \d{t}
    }
    \\
    &  &
    +
    \modulo{
      \int_0^T
      \modulo{
        v_{V_2}\left(\rho_{\strut V_1}\left(t, p (t)\right)\right)
        -
        v_{V_1}\left(\rho_{\strut V_1}\left(t, p (t)\right)\right)}
      \d{t}
    }
    \\
    & \leq &
    \modulo{
      V_2
      \int_0^T
      \modulo{
        \rho_{\strut V_2}\left(t, p (t)\right)
        -
        \rho_{\strut V_1}\left(t, p (t)\right)}
      \d{t}
    }
    +
    \modulo{V_2 - V_1} t
    \\
    & \leq &
    V_2
    \modulo{
      \int_0^T
      \modulo{
        \rho_{\strut 1}\left(t, \frac{p (t)}{V_2}\right)
        -
        \rho_{\strut 1}\left(t, \frac{p (t)}{V_1}\right)}
      \d{t}
    }
    +
    \modulo{V_2 - V_1} t
  \end{eqnarray*}
  and to prove continuity we show that Lemma~\ref{lem:1} can be
  applied.  Indeed, by the maximum principle for conservation laws,
  $\rho (t,x) \geq \check \rho$ for a.e.~$(t,x) \in [0,T] \times
  \reali$. Moreover
  \begin{eqnarray*}
    f_1' \left(\rho\left(t, p (t)\right)\right)
    & = &
    1 - 2\rho \left(t, p (t)\right)
    \leq
    1 - 2 \infess_{x \in \reali} \rho_o
    \\
    \frac{\dot p (t)}{V_i}
    & \geq &
    \frac{\hat V}{V_i} (1-2\check\rho)\geq 1-2\check\rho
  \end{eqnarray*}
  Choosing now $c > 0$ such that $c < 2 (\infess_{x \in \reali} \rho_o
  - \check\rho)$, Lemma~\ref{lem:1} can be applied and we obtain
  \begin{eqnarray*}
    \modulo{
      \int_0^T
      \modulo{
        \rho_{\strut 1}\left(t, \frac{p (t)}{V_2}\right)
        -
        \rho_{\strut 1}\left(t, \frac{p (t)}{V_1}\right)}
      \d{t}
    }
    & \leq &
    \frac{1}{c} \,
    \tv(\rho_o) \,
    \norma{\frac{p (t)}{V_2} - \frac{p (t)}{V_1}}_{\C0 ([0,T]; \reali)}
    \\
    & \leq &
    \frac{1}{c \, \check V} \,
    \tv(\rho_o) \,
    \norma{p}_{\C0 ([0,T]; \reali)} \,
    \modulo{V_2 - V_1}  \,,
  \end{eqnarray*}
  completing the proof.
\end{proofof}

\begin{lemma}
  \label{lem:conca}
  Let $v \in \C2 ([0,1]; [0,V])$ be such that $v' \leq 0$ and $\rho
  \to \rho \, v (\rho)$ is strictly concave. Then, the map $\rho \to
  \, \frac{\rho \, w \, v (\rho)}{w + v (\rho)}$ is strictly concave
  for all $w > 0$.
\end{lemma}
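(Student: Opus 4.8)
The plan is to reduce the claim to a single sign computation. Since $w>0$ and $v(\rho)\ge 0$ on $[0,1]$, the denominator satisfies $w+v(\rho)\ge w>0$, so the map
\[
  g(\rho) = \frac{\rho\,w\,v(\rho)}{w+v(\rho)}
\]
is of class $\C2$ on $[0,1]$, and strict concavity amounts to proving $g''(\rho)<0$ for every $\rho\in[0,1]$. It is convenient to write $u(\rho)=\rho\,v(\rho)$, which is the strictly concave flux, so that $g = w\,u/(w+v)$ with $u'=v+\rho v'$ and $u''=2v'+\rho v''<0$.

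Next I would compute $g''$ by the quotient rule. Writing $D=w+v$, one finds $g' = w\,(u'D-u\,v')/D^2$, and differentiating once more and clearing the common factor $D$ leaves the numerator $u''D^2-u\,v''D-2u'v'D+2u(v')^2$. The key algebraic step is to collect this expression: the two terms carrying $v''$ combine, using $D-v=w$, into $\rho\,v''\,w\,D$, while the remaining first-order terms collapse, again via $D-v=w$, into $2w\big(v'D-\rho(v')^2\big)$. Recombining and recognising $2v'+\rho v''=u''$, the whole numerator factors neatly, yielding
\[
  g''(\rho) = \frac{w^2}{\big(w+v(\rho)\big)^3}
  \Big[\big(w+v(\rho)\big)\,\big(\rho\,v(\rho)\big)'' - 2\,\rho\,\big(v'(\rho)\big)^2\Big].
\]

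Finally I would read off the sign. The prefactor $w^2/(w+v)^3$ is strictly positive because $w>0$ and $v\ge 0$. In the bracket, the first term is strictly negative since $(\rho\,v)''<0$ by hypothesis and $w+v>0$, while the second term $-2\rho(v')^2$ is nonpositive as $\rho\ge 0$; hence the bracket is strictly negative and $g''<0$ throughout $[0,1]$, which is the assertion.

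The only real obstacle is the bookkeeping in the expansion of the numerator: one must verify that the $v''$-terms and the $(v')^2$-terms collapse exactly as claimed, with no residual cross terms surviving. It is worth noting that the monotonicity hypothesis $v'\le 0$ plays no role here --- strict concavity of $\rho\,v(\rho)$ together with $v\ge 0$ and $w>0$ already forces the conclusion.
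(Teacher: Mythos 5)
Your proposal is correct and follows essentially the same route as the paper: the same quotient-rule computation collapsing, via $D-v=w$, to the identical factored formula $q_w''=w^2\,\frac{(v+w)\,q''-2\rho\,(v')^2}{(v+w)^3}$ with $q(\rho)=\rho\,v(\rho)$, followed by the same sign reading (and your closing observation that $v'\le 0$ is never used is accurate --- the paper's proof does not use it either). One shared pedantic point, which the paper glosses in mirror form by concluding only $q_w''\le 0$: strict concavity of a $\C2$ function yields $q''\le 0$ with $q''$ not identically zero on any subinterval, not $q''<0$ pointwise, so your assertion that the bracket is strictly negative \emph{everywhere} slightly overstates the hypothesis; the conclusion still holds because $q_w''\le \frac{w^2}{(v+w)^2}\,q''\le 0$, whence $q_w''$ cannot vanish on any subinterval either.
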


\begin{proof}
  Call $q (\rho) = \rho \, v (\rho)$ and $q_w (\rho) = \frac{\rho
    \, w \, v (\rho)}{w + v (\rho)}$. By direct computations,
  \begin{eqnarray*}
    q_w' (\rho)
    & = &
    w \, \frac{v^2 (\rho) + w \, v (\rho) + w \, \rho \, v' (\rho)}{\left(w + v (\rho)\right)^2}
    \\
    q_w'' (\rho)
    & = &
     w^2 \, \frac{\left(v (\rho) + w\right) \, q'' (\rho) - 2 \, \rho \, \left(v' (\rho)\right)^2}{\left(v (\rho) + w\right)^3}
  \end{eqnarray*}
  which clearly shows that $q_w'' (\rho) \leq 0$, since $q'' \leq 0$.
\end{proof}

The next regularity result is of use below.

\begin{lemma}
  \label{lem:reg}
  Let~\textbf{(v)}, \textbf{(p)} and~\textbf{($\boldsymbol{\chi}$)}
  hold. Then,
  \begin{enumerate}[(i)]
  \item the function $\mathcal{V}$ defined in~\eqref{eq:CP1} is
    continuous;
  \item the map $x \to \mathcal{V} (t,x,\rho)$ is uniformly Lipschitz
    continuous for $t \in \reali^+$ and $\rho \in [0,1]$;
  \item the map $\rho \to \mathcal{V} (t,x,\rho)$ is uniformly
    Lipschitz continuous for $t \in \reali^+$ and $x \in \reali$;
  \item the map $\rho \to \partial_x \mathcal{V} (t,x,\rho)$ is
    uniformly Lipschitz continuous for $t \in \reali^+$ and $x \in
    \reali$.
  \end{enumerate}
\end{lemma}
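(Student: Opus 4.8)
The plan is to push all the nonlinearity into the harmonic mean $H(w,v) = \frac{2\,w\,v}{w+v}$, set $H(0,0)=0$, and study it once and for all on the closed quadrant $\{(w,v) : w \ge 0,\ v \ge 0\}$. Rewriting~\eqref{eq:CP1} as
\begin{displaymath}
  \mathcal{V}(t,x,\rho) = \chi\bigl(x-p(t)\bigr)\,\bigl[H\bigl(\dot p(t),v(\rho)\bigr) - v(\rho)\bigr] + v(\rho)
\end{displaymath}
isolates the $x$-dependence in the single factor $\chi(x-p(t))$ and gives at once
\begin{displaymath}
  \partial_x \mathcal{V}(t,x,\rho) = \chi'\bigl(x-p(t)\bigr)\,\bigl[H\bigl(\dot p(t),v(\rho)\bigr) - v(\rho)\bigr].
\end{displaymath}
By~\textbf{(p)} the map $\dot p$ is continuous and nonnegative, by~\textbf{(v)} the map $v$ is Lipschitz with values in $[0,V]$, and by~\textbf{($\boldsymbol{\chi}$)} both $\chi$ and $\chi'$ are bounded with $\chi \in [0,1]$.

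First I would record three elementary properties of $H$ on the closed quadrant. (a) $H$ is continuous: off the origin it is a quotient with strictly positive denominator, while the inequality $H(w,v) \le \tfrac12(w+v)$, equivalent to $(w-v)^2 \ge 0$, forces $H \to 0 = H(0,0)$ at the origin. (b) $H$ is dominated by $v$: from $w \le w+v$ one gets $0 \le H(w,v) \le 2v \le 2V$, uniformly in $w$. (c) For each fixed $w$ the map $v \to H(w,v)$ is $2$-Lipschitz, uniformly in $w \ge 0$, because $\partial_v H = \frac{2w^2}{(w+v)^2} \in [0,2]$ for $(w,v) \neq (0,0)$, while $H(0,\cdot) \equiv 0$. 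Property (b) is crucial: it makes every bound below uniform in $t \in \reali^+$, even though~\textbf{(p)} does not bound $\dot p$ itself.

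Claims (i)--(iv) then follow. For (i), $\mathcal{V}$ is the composition of the continuous $H$ with the continuous maps $t \to \dot p(t)$, $\rho \to v(\rho)$ and $(t,x) \to \chi(x-p(t))$, plus the continuous remainder $v(\rho)$. For (ii), the displayed expression for $\partial_x \mathcal{V}$ is the product of $\chi'(x-p(t))$, bounded by $\Lip(\chi)$, and $H(\dot p(t),v(\rho)) - v(\rho)$, bounded by $2V$ thanks to (b); hence $x \to \mathcal{V}$ is Lipschitz with a constant uniform in $t,\rho$. For (iii), the term $v(\rho)$ is $\Lip(v)$-Lipschitz in $\rho$, while $\chi(x-p(t))\,H(\dot p(t),v(\rho))$ is, by (c) composed with the $\Lip(v)$-Lipschitz map $v$ and the bound $\chi \le 1$, Lipschitz in $\rho$ with constant at most $2\,\Lip(v)$, uniformly in $t,x$. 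For (iv), the formula for $\partial_x \mathcal{V}$ exhibits it, as a function of $\rho$, as the constant-in-$\rho$ factor $\chi'(x-p(t))$ (bounded by $\Lip(\chi)$) times $H(\dot p(t),v(\rho)) - v(\rho)$, which by (c) is Lipschitz in $\rho$ with constant at most $3\,\Lip(v)$; the product is therefore uniformly Lipschitz in $\rho$.

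The one genuinely delicate point, and the main obstacle, is the singular set $(\dot p(t),v(\rho)) = (0,0)$, where the quotient in~\eqref{eq:CP1} is $0/0$ and $H$ is not differentiable. It is handled precisely by property (a): the value $0$ prescribed there is the continuous extension of $H$, so no discontinuity is introduced. Moreover the estimates in (b) and (c) were arranged to hold up to and including the origin, the Lipschitz-in-$v$ bound surviving at $w=0$ since $H(0,\cdot)\equiv 0$; thus this set needs no treatment beyond the continuity check already made, and all the Lipschitz constants remain the uniform ones found above.
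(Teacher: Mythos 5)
Your proof is correct, and it follows the same basic route as the paper: both isolate the (extended) harmonic mean, check that the value $0$ at $(\dot p, v)=(0,0)$ is its continuous extension, and then get the Lipschitz claims by direct estimates on the factors of $\mathcal{V}$. The difference worth noting is the domain on which you study this map. The paper sets $P=\Lip(p)$ and works on the compact rectangle $[0,P]\times[0,V]$, so continuity plus Weierstra{\ss} yields a bound $M=\max \frac{\dot p\, v}{\dot p+v}$, and (ii) is then an increment estimate $\modulo{\mathcal{V}(t,x_2,\rho)-\mathcal{V}(t,x_1,\rho)}\leq (M+V)\,\Lip(\chi)\,\modulo{x_2-x_1}$, with (iii) and (iv) dispatched as ``direct computations''. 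You instead work on the whole closed quadrant $\{w\geq 0,\ v\geq 0\}$ and replace compactness with the explicit uniform bounds $0\leq H(w,v)\leq 2v\leq 2V$ and $\partial_v H\in[0,2]$. This buys you something real: hypothesis \textbf{(p)} as stated ($p\in\C{1,1}$, $\dot p\geq 0$) bounds the acceleration, not the speed, so $\Lip(p)=\sup\dot p$ need not be finite, and the paper's proof implicitly assumes it is; your estimates are uniform in $\dot p$ and hence do not need that assumption. Your derivation of (ii) and (iv) from the single closed-form expression $\partial_x\mathcal{V}=\chi'\left(x-p(t)\right)\left[H\left(\dot p(t),v(\rho)\right)-v(\rho)\right]$ is also tidier than the paper's increment argument and supplies the computations the paper only asserts, at the modest price of slightly larger (but irrelevant) Lipschitz constants.
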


\begin{proof}
  Call $P = \Lip (p)$ and observe that the map
  \begin{displaymath}
    \begin{array}{ccc}
      [0,P] \times [0,V] & \to &\reali
      \\
      (\dot p, v) & \to &
      \left\{
        \begin{array}{ll}
          \displaystyle
          \frac{\dot p \, v}{\dot p + v} & (\dot p, v) \neq (0,0)
          \\
          0 & (\dot p, v) = (0,0)
        \end{array}
      \right.
    \end{array}
  \end{displaymath}
  is continuous and non negative. The continuity of $\mathcal{V}$
  immediately follows, proving~(i). Call $M = \max_{[0,P] \times
    [0,V]} \frac{\dot p \, v}{\dot p + v}$. Then,
  \begin{eqnarray*}
    \modulo{\mathcal{V} (t,x_2,\rho) - \mathcal{V} (t,x_1,\rho)}
    & \leq &
    (M + V) \,
    \modulo{
      \chi\left(x_2 - \dot p (t)\right)
      -
      \chi\left(x_1 - \dot p (t)\right)}
    \\
    & \leq &
    (M+V) \, \Lip (\chi) \, \modulo{x_2-x_1} \,,
  \end{eqnarray*}
  completing the proof of~(ii). Direct computations show that a
  Lipschitz constant for the map $\rho \to \mathcal{V} (t,x,\rho)$ is
  $2\Lip (v)$, proving~(iii). Finally,entirely analogous computations
  ensure that that $\rho \to \partial_x \mathcal{V} (t,x,\rho)$ is
  Lipschitz continuous with Lipschitz constant $\left(1+\Lip
    (\chi)\right) \Lip (v)$.
\end{proof}

\begin{lemma}
  \label{lem:speriamo}
  Let~\textbf{(v)} hold and fix a positive $P$. Consider the map
  \begin{equation}
    \label{eq:g}
    \begin{array}{ccccccc}
      g & \colon & [0,1] & \times & [0,P] & \to & \reali
      \\
      & & (\rho &, & q) & \to &
      \displaystyle
      \frac{q\, \rho \, v (\rho)}{q + v (\rho)} \,.
    \end{array}
  \end{equation}
  Then, there exists a $L>0$ such that for all $(\rho_1,q_1),
  (\rho_2,q_2) \in [0,1] \times [0,P]$,
  \begin{displaymath}
    \modulo{g (\rho_1,q_1) - g (\rho_1,q_2) - g (\rho_2,q_1) + g (\rho_2,q_2)}
    \leq
    L \;
    \modulo{\rho_1 - \rho_2} \;
    \modulo{q_1 - q_2} \,.
  \end{displaymath}
\end{lemma}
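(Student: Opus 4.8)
The plan is to reduce this mixed second difference to a one–dimensional Lipschitz estimate by first isolating the dependence on $q$. Writing
\[
  g (\rho, q) = \rho \, v (\rho) - \frac{\rho \, v^2 (\rho)}{q + v (\rho)} \,,
\]
the summand $\rho \, v (\rho)$ does not depend on $q$ and hence drops out of the combination $g (\rho_1,q_1) - g (\rho_1,q_2) - g (\rho_2,q_1) + g (\rho_2,q_2)$. Collecting the remaining two pairs of fractions over a common denominator, one obtains the exact identity
\[
  g (\rho_1,q_1) - g (\rho_1,q_2) - g (\rho_2,q_1) + g (\rho_2,q_2)
  =
  (q_1 - q_2) \left[ B (\rho_1) - B (\rho_2) \right] ,
  \qquad
  B (\rho) = \frac{\rho \, v^2 (\rho)}{\left(q_1 + v (\rho)\right) \left(q_2 + v (\rho)\right)} ,
\]
in which $q_1, q_2 \in [0,P]$ appear only as parameters. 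Consequently the assertion is equivalent to showing that $\rho \to B (\rho)$ is Lipschitz continuous with a constant $L$ independent of $q_1, q_2 \in [0,P]$: granting this, the bound $\modulo{B (\rho_1) - B (\rho_2)} \leq L \, \modulo{\rho_1 - \rho_2}$ multiplied by $\modulo{q_1 - q_2}$ is precisely the claimed inequality.

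To bound $B$ I would factor $B (\rho) = \rho \, \phi_1 (\rho) \, \phi_2 (\rho)$, where $\phi_i (\rho) = v (\rho) / \left(q_i + v (\rho)\right) \in [0,1]$. Since $v \in \C{0,1}$ it is absolutely continuous and differentiable a.e., hence so is $B$, and it suffices to estimate $\modulo{B'}$ almost everywhere. Using $\phi_i' (\rho) = q_i \, v' (\rho) / \left(q_i + v (\rho)\right)^2$, the product rule yields
\[
  B' (\rho) = \phi_1 (\rho) \, \phi_2 (\rho) + \rho \left( \phi_1' (\rho) \, \phi_2 (\rho) + \phi_1 (\rho) \, \phi_2' (\rho) \right) .
\]
Here $\modulo{\rho} \leq 1$ and $\modulo{\phi_1 \phi_2} \leq 1$, so the first term is harmless; the content of the lemma is the control of the two mixed products, each of which, after using $\modulo{v'} \leq \Lip (v)$, reduces to bounding a factor of the type $\frac{q_i \, v (\rho)}{\left(q_i + v (\rho)\right)^2 \left(q_j + v (\rho)\right)}$ uniformly in $q_1, q_2$.

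I expect this last point to be the whole difficulty, and it is concentrated at the degenerate corner where $q_1, q_2 \to 0$ and $\rho \to 1$, so that $v (\rho) \to 0$ and the two denominators $q_i + v (\rho)$ vanish together. On any region where $q_i + v (\rho) \geq \delta > 0$ the factors above are plainly bounded and the estimate is routine; all the weight of the proof lies in quantifying the competition between the vanishing numerators $q_i \, v (\rho)$ and the simultaneously vanishing denominators. This is the step where the structure imposed by \textbf{(v)} — the condition $v (1) = 0$ together with the monotonicity and the strict concavity of $\rho \to \rho \, v (\rho)$ — must be exploited, rather than treating $v$ as a generic Lipschitz function; I would first look for an algebraic rearrangement of $B'$ (for instance via $\phi_i = 1 - q_i / (q_i + v)$) exposing a cancellation that keeps $\supess_\rho \modulo{B' (\rho)}$ finite uniformly in $q_1, q_2$. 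Should the pointwise Lipschitz constant of $B$ genuinely degenerate at the corner, the fallback would be to estimate the increment $B (\rho_1) - B (\rho_2)$ directly, using that the large values of $B'$ are confined to a $\rho$-interval whose length is itself comparable to the $q_i$. Securing this uniform control is the crux; the subsequent integration of $B'$ over $[\rho_2, \rho_1]$ and multiplication by $\modulo{q_1 - q_2}$ then immediately give the statement.
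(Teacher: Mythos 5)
Your reduction is correct as far as it goes: the identity
\begin{displaymath}
  g (\rho_1,q_1) - g (\rho_1,q_2) - g (\rho_2,q_1) + g (\rho_2,q_2)
  =
  (q_1 - q_2)\left[B (\rho_1) - B (\rho_2)\right],
  \qquad
  B (\rho) = \frac{\rho \, v^2 (\rho)}{\left(q_1 + v (\rho)\right)\left(q_2 + v (\rho)\right)},
\end{displaymath}
is exact, and you have located the difficulty precisely. But the uniform Lipschitz control of $B$ that you leave open is not merely hard: it is false, and with it the lemma itself, so neither the cancellation you hope to extract from \textbf{(v)} nor your fallback can close the argument. Take $v (\rho) = 1-\rho$, which satisfies \textbf{(v)} (the paper says so explicitly), and for small $\epsilon > 0$ set $\rho_1 = 1$, $\rho_2 = 1-\epsilon$, $q_1 = \epsilon$, $q_2 = 0$. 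Then $g (\rho_1,q_1) = g (\rho_1,q_2) = g (\rho_2,q_2) = 0$ (at $q=0$ one takes the continuous extension $g (\cdot,0)\equiv 0$, exactly as in~\eqref{eq:CP1}; taking $q_2 = \epsilon^2$ instead avoids the boundary point and changes nothing), while $g (\rho_2,q_1) = \epsilon (1-\epsilon)/2$. The mixed difference thus has modulus $\epsilon (1-\epsilon)/2$, whereas $\modulo{\rho_1-\rho_2}\,\modulo{q_1-q_2} = \epsilon^2$, forcing $L \geq (1-\epsilon)/(2\epsilon)$ for every $\epsilon$, which is impossible. In your notation, with these $q_i$ one has $B (\rho) = \rho (1-\rho)/(\epsilon + 1 - \rho)$, which drops from $(1-\epsilon)/2$ to $0$ as $\rho$ runs over $[1-\epsilon,1]$: the large values of $B'$ are indeed confined to an interval of length $\epsilon$, but the increment of $B$ across it is of order $1$, not of order $\epsilon$, so your fallback fails as well. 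The degeneration you feared at the corner $q_i \to 0$, $v (\rho) \to 0$ is genuine, and no condition in \textbf{(v)} excludes it.

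For comparison, the paper's own proof takes the equivalent route through the mixed derivative and asserts $\sup \modulo{\partial^2_{\rho q} g} < +\infty$, but its displayed formula for $\partial^2_{\rho q} g$ is miscomputed (the denominators inherited from $\partial_q g = \rho v^2/(q+v)^2$ are dropped). The correct expression is
\begin{displaymath}
  \partial^2_{\rho q} g (\rho,q)
  =
  \frac{v^2 (\rho)}{\left(q+v (\rho)\right)^2}
  +
  \frac{2\,\rho \, q \, v (\rho) \, v' (\rho)}{\left(q+v (\rho)\right)^3}\,,
\end{displaymath}
whose second term is of size $\modulo{v' (\rho)}/\left(q+v (\rho)\right)$ when $q \approx v (\rho)$ and blows up at precisely your corner (for $v (\rho) = 1-\rho$ and $q = v = \epsilon$ it behaves like $-1/(4\epsilon)$). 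So your diagnosis is in fact sharper than the paper's argument: Lemma~\ref{lem:speriamo} is false as stated, and a correct version requires an additional hypothesis, e.g.~$q \geq q_0 > 0$ (i.e.~$\dot p$ bounded away from zero, which gives $q v/(q+v)^3 \leq 1/\left(4 (q+v)\right) \leq 1/(4q_0)$) or a degeneracy condition such as $\modulo{v'} \leq C\, v$ (then $q v^2 \leq \frac{4}{27}(q+v)^3$ bounds the offending term); note $v (\rho)=1-\rho$ violates both near $\rho = 1$, $q=0$. Since the constant $\Lip (g)$ from this lemma is what controls $\bar I_5$, and hence $C$ in~\eqref{eq:C}, in the proof of Proposition~\ref{prop:Uni}, the issue is not cosmetic there either.
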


\begin{proof}
  Compute first the partial derivative
  \begin{displaymath}
    \partial_q g (\rho,q)
    =
    \frac{\rho \, v^2 (\rho)}{\left(q + v (\rho)\right)^2} \,.
  \end{displaymath}
  By~\textbf{(v)}, the map $\rho \to \partial_q g (\rho,q)$ is
  Lipschitz continuous on $[0,1] \times [0,P]$, hence it is
  a.e.~differentiable with respect to $\rho$. Moreover
  \begin{displaymath}
    \partial^2_{\rho q} g (\rho,q)
    =
    v (\rho)\left(v (\rho) + 2\rho\, v' (\rho)\right)
    -
    2 \frac{\rho\, v^2 (\rho) \, v' (\rho)}{q+v (\rho)}
  \end{displaymath}
  and, clearly, $\sup_{[0,1]\times[0,P]} \modulo{\partial^2_{\rho q} g
    (\rho,q)} < +\infty$.  We can then write:
  \begin{eqnarray*}
    & &
    \modulo{g (\rho_1,q_1) - g (\rho_1,q_2) - g (\rho_2,q_1) + g (\rho_2,q_2)}
    \\
    & = &
    \modulo{
      \int_0^1
      \partial_q g\left(\rho_1, (1-\theta)q_1 + \theta q_2\right) \d\theta
      (q_1-q_2)
      -
      \int_0^1
      \partial_q g\left(\rho_2, (1-\theta)q_1 + \theta q_2\right) \d\theta
      (q_1-q_2)
    }
    \\
    & = &
    \modulo{
      \int_0^1 \! \int_0^1
      \partial^2_{\rho q} g
      \left(
        (1-\eta)\rho_1 + \eta \rho_2, (1-\theta)q_1 + \theta q_2
      \right)
      \d\eta \d\theta
    }
    \modulo{\rho_1-\rho_2} \,\modulo{q_1 - q_2}
    \\
    & \leq &
    \left(\sup_{[0,1]\times[0,P]} \modulo{\partial^2_{\rho q} g (\rho,q)}\right)
    \modulo{\rho_1-\rho_2} \,\modulo{q_1 - q_2} \,,
  \end{eqnarray*}
  completing the proof.
\end{proof}

\begin{proofof}{Proposition~\ref{prop:exi}}
  In the present setting, the assumptions
  of~\cite[Theorem~2]{Panov_2010_1} read:
  \begin{enumerate}[(1)]
  \item $f$ is a Caratheodory vector field on $\reali^+ \times
    \reali$, i.e., for a.e.~$(t,x) \in \reali^+ \times \reali$, the
    map $\rho \to f (t,x,\rho)$ is continuous and for all $\rho \in
    [0,1]$ the map $(t,x) \to f (t,x,u)$ is measurable.
  \item The map $ \rho \to f (t,x,\rho)$ is non--degenerate, i.e., it
    is not affine on non trivial intervals.
  \item For some $a,b \in [0,1]$, for all $(t,x) \in \reali^+ \times
    \reali$, $f (t,x,a) = f (t,x,b) = 0$ and the map $(t,x) \to
    \max_{\rho \in [a,b]} \modulo{f (t,x,\rho)}$ is in $\Lloc{q}
    (\reali^+ \times \reali; \reali)$ for a $q > 2$.
  \end{enumerate}
  \noindent Note that~(1) directly follows from~\textbf{(v)},
  \textbf{(p)} and~\textbf{($\boldsymbol{\chi}$)}. The requirement~(2)
  follows from~\textbf{(v)}, \textbf{($\boldsymbol\chi$)}
  and~\textbf{(p)}, indeed they ensure that $\mathcal{V}$ is a convex
  combination of strictly concave functions. Condition~(3) can be
  easily verified, with $a=0$ and $b=1$, thanks to~\textbf{(v)} and
  since $f (t,x,\rho) \in [0, \max\{\Lip (\dot p), V\}]$. Hence,
  \cite[Theorem~2]{Panov_2010_1} applies.
\end{proofof}

\begin{proofof}{Proposition~\ref{prop:def}}
  Observe that by (ii)~of Lemma~\ref{lem:reg}, the distributional
  derivative of the map $(t,x) \to \partial_x f (t,x,k)$ has no
  singular part. Hence, $\mu^ks$ vanishes
  in~\eqref{eq:DefPanov}. Moreover, choosing a test function with
  support in $\pint{\reali}^+ \times \reali$ makes the latter summand
  in the left hand side of~\eqref{eq:DefPanov} vanish. Hence,
  \eqref{eq:DefPanov} implies~\eqref{eq:DefKruzkov}. Finally, the
  condition~\eqref{eq:DefKruzkov0} on the initial datum is known to be
  implied by the stronger~\eqref{eq:DefPanov}, see for
  instance~\cite[Formula~(10)]{Panov_2010_1}.
\end{proofof}

\begin{lemma}
  \label{lem:Fregatura}
  \cite[Theorem~1.1]{KarlsenRisebro2003} does not apply
  to~\eqref{eq:CP1}, since the one sided Lipschitz
  condition~\cite[Formula~(1.7)]{KarlsenRisebro2003} may fail to hold.
\end{lemma}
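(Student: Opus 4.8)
The plan is to recall the one-sided Lipschitz condition~\cite[Formula~(1.7)]{KarlsenRisebro2003} and to show, by means of the explicit form of the flux in~\eqref{eq:CP1}, that it need not hold. To this aim it is convenient to rewrite the flux in the compact form
\begin{displaymath}
  f (t,x,\rho)
  =
  \rho \, v (\rho)
  \left[
    1
    -
    \chi\left(x - p (t)\right) \,
    \frac{v (\rho) - \dot p (t)}{v (\rho) + \dot p (t)}
  \right] ,
\end{displaymath}
valid wherever $\left(\dot p (t), v (\rho)\right) \neq (0,0)$, so that the whole dependence of $f$ on the space variable $x$ enters only through the factor $\chi\left(x - p (t)\right)$.

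Condition~\cite[Formula~(1.7)]{KarlsenRisebro2003} amounts to a one-sided bound, with a definite sign, on the spatial variation of the flux, i.e.~on the map $(t,x) \to \partial_\rho \partial_x f (t,x,\rho)$. I would therefore compute
\begin{displaymath}
  \partial_x f (t,x,\rho)
  =
  -
  \chi'\left(x - p (t)\right) \,
  \rho \, v (\rho) \,
  \frac{v (\rho) - \dot p (t)}{v (\rho) + \dot p (t)}
  \qquad \mbox{ and } \qquad
  \partial_\rho \partial_x f (t,x,\rho)
  =
  \chi'\left(x - p (t)\right) \, H (t,\rho) ,
\end{displaymath}
where $H$ collects the (bounded, by Lemma~\ref{lem:reg}) $\rho$-derivative of the coefficient of $\chi'$. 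The crucial remark is that, by~\textbf{($\boldsymbol{\chi}$)}, the function $\chi$ rises from $0$ to its maximal value and falls back to $0$, so that $\chi'$ is strictly positive on part of its support and strictly negative on another part. Hence, as $x$ crosses $p (t)$, the quantity $\partial_\rho \partial_x f$ changes sign, and no sign-definite one-sided bound can hold simultaneously on both sides of $p (t)$.

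To make the failure explicit I would fix the standard choice $v (\rho) = V (1-\rho)$, which satisfies~\textbf{(v)}, and the trajectory $p (t) = \bar q \, t$ with a constant $\bar q > 0$, which satisfies~\textbf{(p)}. Then I would select a density value $k \in \left]0,1\right[$ for which $H (t,k) \neq 0$ and evaluate $\partial_\rho \partial_x f (t, \cdot, k)$ at two points, one on each side of $p (t)$, lying respectively in the regions where $\chi' > 0$ and $\chi' < 0$: the two values have opposite signs, contradicting the one-sided requirement of~\cite[Formula~(1.7)]{KarlsenRisebro2003}. The main obstacle is precisely in matching the exact analytic form of~\cite[Formula~(1.7)]{KarlsenRisebro2003} and in checking that the coefficient $H (t,k)$ multiplying $\chi'$ does not vanish at the chosen configuration, so that the sign of $\partial_\rho \partial_x f$ is genuinely governed by that of $\chi'$; the remaining computations are then elementary.
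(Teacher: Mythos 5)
There is a genuine gap: you have misread what \cite[Formula~(1.7)]{KarlsenRisebro2003} requires, and as a consequence your proposed failure mechanism does not actually violate it. The condition is not a sign-definiteness requirement on $(t,x)\to\partial^2_{\rho x} f$; in the present setting it reads
\begin{displaymath}
  \left(F (t_1,x_1,\rho_1,\rho_2) - F (t_2,x_2,\rho_1,\rho_2)\right)(x_1-x_2)
  \geq
  -C \,\modulo{\rho_1-\rho_2}\,(x_1-x_2)^2,
  \quad
  F = \sgn (\rho_1-\rho_2)\left(f (\cdot,\cdot,\rho_1)-f (\cdot,\cdot,\rho_2)\right),
\end{displaymath}
i.e.~a \emph{one-sided lower} bound on joint increments in $(t,x)$. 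At equal times $t_1=t_2=t$ one has $F (t,x_1,\rho_1,\rho_2)-F (t,x_2,\rho_1,\rho_2) = \sgn (\rho_1-\rho_2)\int_{x_2}^{x_1}\left[\partial_x f (t,\xi,\rho_1)-\partial_x f (t,\xi,\rho_2)\right]\d\xi$, and since $\rho \to \partial_x f (t,x,\rho)$ is uniformly Lipschitz (points~(iii)--(iv) of Lemma~\ref{lem:reg}; this is exactly the hypothesis~(4) that the paper verifies), the inequality holds with $C$ equal to that Lipschitz constant. A bounded mixed derivative $\partial^2_{\rho x}f$ satisfies a one-sided lower bound no matter how its sign oscillates, so the sign change of $\chi'$ across $p (t)$ is harmless: your configuration, two points on either side of $p (t)$ at the same time, in fact \emph{satisfies} the condition. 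Concluding from ``opposite signs of $\partial^2_{\rho x}f$'' to a contradiction is the step that fails.

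The paper's proof instead first checks that hypotheses (1)--(4) of \cite[Theorem~1.1]{KarlsenRisebro2003} do hold, and then breaks (1.7) through its \emph{time} dependence, which your fixed-time differentiation can never reach. Take $v (\rho)=1-\rho$, $p (t)=t$, $\rho_1=1/2$, $\rho_2=0$, $(t_1,x_1)=(0,0)$ and $(t_2,x_2)=(2,\epsilon)$: then $\chi (x_1-p (t_1))=\chi (0)=1$ while $\chi (\epsilon-2)=0$, so $F (t_1,x_1,\rho_1,\rho_2)-F (t_2,x_2,\rho_1,\rho_2) = f (0,0,1/2)-f (2,\epsilon,1/2)=\frac13-\frac14=\frac1{12}$ remains bounded away from zero while $x_1-x_2=-\epsilon\to 0$. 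The left-hand side is thus linear in $\epsilon$ and the right-hand side quadratic, and the condition forces $\epsilon/6 \leq C\,\epsilon^2$, impossible for small $\epsilon$. To repair your argument you must allow $t_1 \neq t_2$ with a fixed time gap and let the spatial separation vanish; any same-time computation is defeated by the already-established uniform Lipschitz bound on $\rho \to \partial_x f$.
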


\begin{proof}
  In the present setting, due to the absence of the parabolic term and
  of the source on the right hand side of~\eqref{eq:3}, the
  assumptions in~\cite{KarlsenRisebro2003} necessary to
  apply~\cite[Theorem~1.1]{KarlsenRisebro2003} on the time interval
  $[0,T]$, for any $T>0$, are the following:
  \begin{enumerate}[(1)]
  \item $f (t,x,0) = \partial_x f (t,x,0) = 0$ for a.e.~$t \in
    \reali^+$ and for all $x \in \reali$.
  \item The map $(t,x) \to f (t,x,\rho)$ is in $\L1 (\reali^+;
    \Wloc{1,1} (\reali;\reali))$ for all $\rho \in [0,1]$.
  \item For any $T>0$, the map $(t,x) \to \partial_x f (t,x,\rho)$ is
    in $\L1 ([0,T]; \L\infty (\reali;\reali))$ for all $\rho \in
    [0,1]$.
  \item There exists a positive $L$ such that for a.e.~$t \in
    \reali^+$, for all $x \in \reali$ and all $\rho_1,\rho_2 \in
    [0,1]$
    \begin{displaymath}
      \modulo{f (t,x,\rho_2) - f (t,x,\rho_1)}
      +
      \modulo{\partial_x f (t,x,\rho_2) - \partial_xf (t,x,\rho_1)}
      \leq
      L \, \modulo{\rho_2 - \rho_1} \,.
    \end{displaymath}
  \item Define $F (t,x,\rho_1,\rho_2) = \sgn (\rho_1 - \rho_2) \,
    \left(f (t,x,\rho_1) - f (t,x,\rho_2)\right)$. There exists a
    positive $C$ such that for a.e.~$t_1,t_2 \in \reali^+$, for all
    $x_1,x_2 \in \reali$ and all $\rho_1,\rho_2 \in [0,1]$
    \begin{displaymath}
      \left(
        F (t_1,x_1,\rho_1,\rho_2) -
        F (t_2,x_2,\rho_1,\rho_2)
      \right)
      (x_1 - x_2)
      \geq - C \, \modulo{\rho_1 - \rho_2} \, (x_1-x_2)^2 \,.
    \end{displaymath}
  \end{enumerate}
  \noindent We first prove that~(1), (2), (3) and~(4) hold.

  Note that~(1) is immediate, by~\eqref{eq:CP1}
  and~\textbf{(v)}. By~(2) we mean that for any compact set $K \subset
  \reali$, for any positive $T$ and for any $\rho \in [0,1]$, the map
  $t \to \int_K \left( \modulo{f (t,x,\rho)} + \modulo{\partial_x f
      (t,x,\rho)} \right) \d{x}$ is in $\L1 (0,T];\reali)$, which is
  immediate since both $f$ and $\partial_x f$ are uniformly bounded,
  thanks to~\eqref{eq:CP1} and Lemma~\ref{lem:reg}. This uniform bound
  on $\partial_x f$ also proves~(3). At~(4), the Lipschitz continuity
  of $\rho \to f (t,x,\rho)$, respectively $\rho \to \partial_x f
  (t,x,\rho)$, is proved in~(iii), respectively~(iv), of
  Lemma~\ref{lem:reg}.

  Finally, we note that~(5) may fail to hold, due to the dependence of
  the left hand side on time. Assume, for instance, that
  \begin{displaymath}
    \begin{array}{rcl@{\qquad}rcl@{\qquad}rcl@{\qquad}rcl}
      v (\rho) & = & 1-\rho
      &
      t_1 & = & 0
      &
      x_1 & =  & 0
      &
      \rho_1 & = & 1/2
      \\
      p (t) & = & t
      &
      t_2 & = & 2
      &
      x_2 & = & \epsilon
      &
      \rho_2 & = & 0
    \end{array}
  \end{displaymath}
  the, condition~(5) amounts to require the existence of a constant
  $C$ such that $\epsilon/6 \leq C \, \epsilon^2$, which is not
  possible.
\end{proof}

\begin{proofof}{Proposition~\ref{prop:Uni}}
  We exploit the doubling of variables method, see~\cite{Kruzkov}. To
  this aim, assume that $\rho_1$ and $\rho_2$ are two solutions
  to~\eqref{eq:CP} in the sense of Definition~\ref{def:sol}. Let $\psi
  = \psi (t,x,s,y)$ be in $\Cc\infty\left((\pint{\reali}^+ \times
    \reali)^2; \reali^+\right)$, write~\eqref{eq:DefKruzkov} for $\rho
  = \rho_1 (t,x)$ and for $k = \rho_2 (s,y)$, integrate the resulting
  inequality on $\reali^+ \times \reali$, to obtain:
  \begin{displaymath}
    \begin{array}{@{}l@{}l@{}l@{}}
      \displaystyle
      \int_{\reali^+} \!\! \int_{\reali} \! \int_{\reali^+} \!\!  \int_{\reali}
      &
      \displaystyle
      \Big[
      \modulo{\rho_1 (t,x) - \rho_2(s,y)} \, \partial_t \psi(t,x,s,y)
      \\
      &
      \displaystyle
      +
      \sgn\left(\rho_1 (t,x) - \rho_2(s,y)\right)
      \left(
        f\left(t, x, \rho_1 (t,x)\right)
        -
        f \left(t, x, \rho_2(s,y)\right)
      \right)
      \partial_x \psi(t,x,s,y)
      \\
      &
      \displaystyle
      -
      \sgn \left(\rho_1 (t,x) - \rho_2(s,y)\right)
      \partial_x f \left(t,x,\rho_2(s,y)\right) \, \psi(t,x,s,y)
      \Big] \d{x} \d{t} \d{y} \d{s}
      &
      \geq 0.
    \end{array}
  \end{displaymath}
  Repeat now the same procedure exchanging the roles of $\rho_1$ and
  $\rho_2$, obtaining
  \begin{displaymath}
    \begin{array}{@{}l@{}l@{}l@{}}
      \displaystyle
      \int_{\reali^+} \!\! \int_{\reali} \! \int_{\reali^+} \!\!  \int_{\reali}
      &
      \displaystyle
      \Big[
      \modulo{\rho_1 (t,x) - \rho_2(s,y)} \, \partial_s \psi(t,x,s,y)
      \\
      &
      \displaystyle
      +
      \sgn\left(\rho_2 (s,y) - \rho_1(t,x)\right)
      \left(
        f\left(s, y, \rho_2 (s,y)\right)
        -
        f \left(s, y, \rho_1(t,x)\right)
      \right)
      \partial_y \psi(t,x,s,y)
      \\
      &
      \displaystyle
      -
      \sgn \left(\rho_2 (s,y) - \rho_1(t,x)\right)
      \partial_y f \left(s,y,\rho_1(t,x)\right) \, \psi(t,x,s,y)
      \Big] \d{x} \d{t} \d{y} \d{s}
      &
      \geq 0.
    \end{array}
  \end{displaymath}
  The sum of the latter two inequalities above yields
  \begin{displaymath}
    \begin{array}{@{}l@{}}
      \displaystyle
      \int_{\reali^+} \!\! \int_{\reali} \! \int_{\reali^+} \!\!  \int_{\reali}
      \Big[
      \modulo{\rho_1 (t,x) - \rho_2(s,y)}
      \left(\partial_t \psi(t,x,s,y) + \partial_s \psi(t,x,s,y)\right)
      \\
      +
      \sgn\left(\rho_1 (t,x) - \rho_2(s,y)\right)
      \left(
        f\left(t, x, \rho_1 (t,x)\right)
        -
        f \left(t, x, \rho_2(s,y)\right)
      \right)
      \partial_x \psi(t,x,s,y)
      \\
      +
      \sgn\left(\rho_1 (t,x) - \rho_2(s,y)\right)
      \left(
        f\left(s, y, \rho_1 (t,x)\right)
        -
        f \left(s, y, \rho_2(s,y)\right)
      \right)
      \partial_y \psi(t,x,s,y)
      \\
      +
      \sgn \left(\rho_1 (t,x) - \rho_2(s,y)\right) \!
      \left(
        \partial_y f \!\left(s,y,\rho_1(t,x)\right)
        -
        \partial_x f \! \left(t,x,\rho_2(s,y)\right)
      \right) \!
      \psi(t,x,s,y)
      \Big] \!\d{x} \d{t} \d{y} \d{s}
      \geq 0.
    \end{array}
  \end{displaymath}
  Following~\cite{KarlsenRisebro2003}, since
  \begin{displaymath}
    \begin{array}{l@{}l@{}l}
      &
      \partial_x
      \Big[    \left(
        f\left(s, y, \rho_2 (s,y)\right)
        -
        f \left(t, x, \rho_2(t,x)\right)
      \right)
      \psi (t,x,s,y)
      \Big]\\
      &
      =
      -\partial_xf \left(t, x, \rho_2(t,x)\right)+f\left(s, y, \rho_2 (s,y)\right) \partial_x \psi (t,x,s,y)-f \left(t, x, \rho_2(t,x)\right) \partial_x \psi (t,x,s,y)\,,
    \end{array}
  \end{displaymath}
  we get

  \begin{displaymath}
    \begin{array}{l@{}l@{}l}
      &
      \Big[f \left(t, x, \rho_1(t,x)\right)-f\left(t, x, \rho_2 (s,y)\right)\Big] \partial_x \psi (t,x,s,y)- \partial_x f\left(t, x, \rho_2 (s,y)\right)\psi (t,x,s,y)\\
      &
      =
      \Big[f \left(t, x, \rho_1(t,x)\right)-f\left(s, y, \rho_2 (s,y)\right)\Big] \partial_x \psi (t,x,s,y)\\
      &
      +
      \partial_x
      \Big[    \left(
        f\left(s, y, \rho_2 (s,y)\right)
        -
        f \left(t, x, \rho_2(t,x)\right)
      \right)
      \psi (t,x,s,y)
      \Big]
    \end{array}
  \end{displaymath}

  and similarly

  \begin{displaymath}
    \begin{array}{l@{}l@{}l}
      &
      \Big[f \left(s, y, \rho_1(t,x)\right)-f\left(s, y, \rho_2 (s,y)\right)\Big] \partial_y \psi (t,x,s,y)- \partial_y f\left(s, y, \rho_1 (t,x)\right)\psi (t,x,s,y)\\
      &
      =
      \Big[f \left(t, x, \rho_1(t,x)\right)-f\left(s, y, \rho_2 (s,y)\right)\Big] \partial_y \psi (t,x,s,y)\\
      &
      -
      \partial_y
      \Big[    \left(
        f\left(t, x, \rho_1 (t,x)\right)
        -
        f \left(s, y, \rho_1(t,x)\right)
      \right)
      \psi (t,x,s,y)
      \Big]\,.
    \end{array}
  \end{displaymath}
  Thus, following where possible the notation
  in~\cite[p.~1093]{KarlsenRisebro2003}, we have
  \begin{equation}
    \label{eq:3.35}
    \int_{\reali^+} \!\! \int_{\reali} \! \int_{\reali^+} \!\!  \int_{\reali}
    \left(
      I_0 + I_1+ I_3
    \right)
    \d{x} \d{t} \d{y} \d{s}
    \geq 0
  \end{equation}
  where
  \begin{eqnarray*}
    I_0
    & = &
    \modulo{\rho_1 (t,x) - \rho_2(s,y)}
    \left(\partial_t \psi(t,x,s,y) + \partial_s \psi(t,x,s,y)\right)
    \\
    I_1
    & = &
    \sgn\left(\rho_1 (t,x) - \rho_2(s,y)\right)
    \Big[f \left(t, x, \rho_1(t,x)\right)-f\left(s, y, \rho_2 (s,y)\right)\Big]
\\
& &
\qquad \times
\left( \partial_x \psi (t,x,s,y)+\partial_y \psi (t,x,s,y)\right)
    \\
    I_3
    & = &
    \sgn\left(\rho_1 (t,x) - \rho_2(s,y)\right)
    \Big[
    \partial_x
    \left(    \left(
        f\left(s, y, \rho_2 (s,y)\right)
        -
        f \left(t, x, \rho_2(s,y)\right)
      \right)
      \psi (t,x,s,y)
    \right)
    \\
    & &
    \qquad\qquad-
    \partial_y
    \left(    \left(
        f\left(t, x, \rho_1 (t,x)\right)
        -
        f \left(s, y, \rho_1(t,x)\right)
      \right)
      \psi (t,x,s,y)
    \right)
    \Big]
    \,.
  \end{eqnarray*}
  We proceed towards the choice of the test functions introducing
  first a map
  \begin{displaymath}
    \delta \in \Cc\infty (\reali;\reali^+)
    \quad \mbox{ such that } \quad
    \spt \delta \subseteq [-1,1]
    \,,\quad \delta (-\xi) = \delta (\xi) \,,\quad
    \quad \mbox{ and } \quad
    \int_{\reali} \delta (\xi) \d{\xi} = 1 \,.
  \end{displaymath}
  Moreover, for $r>0$, let
  \begin{equation}
    \label{eq:deltar}
    \delta_r (t) = \delta (t/r)/r
    \quad \mbox{ and } \quad
    \omega_r (x) =  \delta\left(x^2 / r^2\right) / (2r) \,.
  \end{equation}
  and for a $\phi \in \Cc\infty (\reali^+ \times \reali; \reali^+)$,
  choose
  \begin{displaymath}
    \psi (t,x,s,y)
    =
    \phi\left(\frac{t+s}{2}, \frac{x+y}{2}\right)
    \;
    \omega_r\left(\frac{x-y}{2}\right)
    \;
    \delta_r\left(\frac{t-s}{2}\right) \,.
  \end{displaymath}
  We then rewrite~\eqref{eq:3.35} as
  \begin{equation}
    \label{eq:3.38}
    \int_{\reali^+} \! \int_{\reali} \! \int_{\reali^+} \! \int_{\reali}
    \left[
      \left(
        \bar I_0 + \bar I_1 + \bar I_3
      \right)
      \omega_r\left(\frac{x-y}{2}\right)
      \delta_r\left(\frac{t-s}{2}\right)
      +
      \bar I_5  \partial_x \omega_r\left(\frac{x-y}{2}\right)
    \right]
    \d{x} \d{t} \d{y} \d{s}
    \geq 0
  \end{equation}
  where
  \begin{eqnarray*}
    \bar I_0
    & = &
    \modulo{\rho_1 (t,x) - \rho_2 (s,y)}
    \left(
      \partial_t \phi\left(\frac{t+s}{2}, \frac{x+y}{2}\right)
      +
      \partial_s \phi\left(\frac{t+s}{2}, \frac{x+y}{2}\right)
    \right)
    \\
    \bar I_1
    & = &
    \sgn\left(\rho_1 (t,x) - \rho_2 (s,y)\right)
    \left(
      f \left(t,x,\rho_1 (t,x)\right)
      -
      f\left(s,y,\rho_2 (s,y)\right)
    \right)
    \\
    & &
    \quad
    \times
    \left(
      \partial_x \phi\left(\frac{t+s}{2}, \frac{x+y}{2}\right)
      +
      \partial_y \phi\left(\frac{t+s}{2}, \frac{x+y}{2}\right)
    \right)
    \\
    \bar I_3
    & = &
    \sgn\left(\rho_1 (t,x) - \rho_2 (s,y)\right)
    \\
    & &
    \! \times \!
    \Big[ \!
    \left(
      \partial_x \! \left[
        f \left(s,y,\rho_2 (s,y)\right)
        -
        f\left(t,x,\rho_2 (s,y)\right)
      \right]
      -
      \partial_y \! \left[
        f \left(t,x,\rho_1 (t,x)\right)
        -
        f\left(s,y,\rho_1 (t,x)\right)
      \right]
    \right)
    \\
    & &
    \qquad\qquad
    \times
    \phi\left(\frac{t+s}{2},\frac{x+y}{2}\right)
    \\
    & &
    \qquad\quad
    +
    \left(
      f \left(s,y,\rho_2 (s,y)\right)
      -
      f\left(t,x,\rho_2 (s,y)\right)
    \right)
    \partial_x \phi\left(\frac{t+s}{2},\frac{x+y}{2}\right)
    \\
    & &
    \qquad\quad
    +
    \left(
      f \left(t,x,\rho_1 (t,x)\right)
      -
      f\left(s,y,\rho_1 (t,x)\right)
    \right)
    \partial_y \phi\left(\frac{t+s}{2},\frac{x+y}{2}\right)
    \Big]
    \\
    \bar I_5
    & = &
    \sgn\left(\rho_1 (t,x) - \rho_2 (s,y)\right)
    \\
    & &
    \times
    \left(
      f\left(t,x,\rho_1 (t,x)\right)
      -
      f\left(t,x,\rho_2 (s,y)\right)
      -
      f\left(s,y,\rho_1 (t,x)\right)
      +
      f\left(s,y,\rho_2 (s,y)\right)
    \right)
    \\
    & &
    \times
    \phi\left(\frac{t+s}{2}, \frac{x+y}{2}\right) \;
    \delta_r\left(\frac{t-s}{2}\right) \,.
  \end{eqnarray*}
  We first estimate the term $\bar I_5$ as follows:
  \begin{eqnarray*}
    \bar I_5
    & = &
    \sgn\left(\rho_1 (t,x) - \rho_2 (s,y)\right)
    \\
    & &
    \times
    \left(
      f\left(t,x,\rho_1 (t,x)\right)
      -
      f\left(t,x,\rho_2 (s,y)\right)
      -
      f\left(s,y,\rho_1 (t,x)\right)
      +
      f\left(s,y,\rho_2 (s,y)\right)
    \right)
    \\
    & &
    \times
    \phi\left(\frac{t+s}{2}, \frac{x+y}{2}\right) \;
    \delta_r\left(\frac{t-s}{2}\right)
  \end{eqnarray*}
  We now estimate the term in parentheses above:
  \begin{eqnarray*}
    & &
    f\left(t,x,\rho_1 (t,x)\right)
    -
    f\left(t,x,\rho_2 (s,y)\right)
    -
    f\left(s,y,\rho_1 (t,x)\right)
    +
    f\left(s,y,\rho_2 (s,y)\right)
    \\
    & = &
    \chi\left(x-p (t)\right)
    \left(
      \frac{\dot p (t) \, \rho_1 \, v (\rho_1)}{\dot p (t) + v (\rho_1)}
      -
      \frac{\dot p (t) \, \rho_2 \, v (\rho_2)}{\dot p (t) + v (\rho_2)}
    \right)
    \\
    & &
    \qquad -
    \chi\left(y-p (s)\right)
    \left(
      \frac{\dot p (s) \, \rho_1 \, v (\rho_1)}{\dot p (s) + v (\rho_1)}
      -
      \frac{\dot p (s) \, \rho_2 \, v (\rho_2)}{\dot p (s) + v (\rho_2)}
    \right)
    \\
    & &
    +
    \left(1-\chi\left(x-p (t)\right)\right)
    \left(\rho_1 \, v (\rho_1) - \rho_2 \, v (\rho_2)\right)
    \\
    & &
    \qquad
    -
    \left(1-\chi\left(x-p (s)\right)\right)
    \left(\rho_1 \, v (\rho_1) - \rho_2 \, v (\rho_2)\right)
    \\
    & = &
    \left(
      \chi\left(x-p (t)\right) - \chi\left(y-p (s)\right)
    \right)
    \left(
      \frac{\dot p (t) \, \rho_1 \, v (\rho_1)}{\dot p (t) + v (\rho_1)}
      -
      \frac{\dot p (t) \, \rho_2 \, v (\rho_2)}{\dot p (t) + v (\rho_2)}
    \right)
    \\
    & &
    +
    \chi\left(y-p (s)\right)
    \left(
      \frac{\dot p (t) \, \rho_1 \, v (\rho_1)}{\dot p (t) + v (\rho_1)}
      -
      \frac{\dot p (t) \, \rho_2 \, v (\rho_2)}{\dot p (t) + v (\rho_2)}
      -
      \frac{\dot p (s) \, \rho_1 \, v (\rho_1)}{\dot p (s) + v (\rho_1)}
      +
      \frac{\dot p (s) \, \rho_2 \, v (\rho_2)}{\dot p (s) + v (\rho_2)}
    \right)
    \\
    & &
    -
    \left(
      \chi\left(x-p (t)\right) - \chi\left(x-p (s)\right)
    \right)
    \left(\rho_1 \, v (\rho_1) - \rho_2 \, v (\rho_2)\right)
  \end{eqnarray*}
  To bound the absolute values of the terms above,
  use~\textbf{($\boldsymbol{\chi}$)}, the Lipschitz continuity of the
  map $\rho \to (\rho \, v (r)) / \left(\dot p + v (\rho)\right)$ with
  Lipschitz constant $L$, the boundedness of $\dot p$ and the
  Lipschitz continuity of the map $\rho \to \rho\, v (\rho)$ with
  Lipschitz constant $\Lip (\rho v)$
  \begin{eqnarray*}
    \modulo{\chi\left(x-p (t)\right) - \chi\left(y-p (s)\right)}
    & \leq &
    \Lip (\chi) \left(\modulo{x-y} + \modulo{p (t) - p (s)}\right)
    \\
    & \leq &
    \Lip (\chi)\left(1+\Lip (p)\right)
    \left(\modulo{x-y} + \modulo{t - s}\right)
    \\
    \modulo{\frac{\dot p (t) \, \rho_1 \, v (\rho_1)}{\dot p (t) + v (\rho_1)}
      -
      \frac{\dot p (t) \, \rho_2 \, v (\rho_2)}{\dot p (t) + v (\rho_2)}}
    & \leq &
    \Lip (p) \, L \, \modulo{\rho_1 - \rho_2}
    \\
    \modulo{\rho_1 \, v (\rho_1) - \rho_2 \, v (\rho_2)}
    & \leq &
    \Lip (\rho v) \, \modulo{\rho_1 - \rho_2}
  \end{eqnarray*}
  while the remaining term is estimated by means of $g$ as
  defined at~\eqref{eq:g} in Lemma~\ref{lem:speriamo}:
  \begin{eqnarray*}
    & &
    \modulo{  \frac{\dot p (t) \, \rho_1 \, v (\rho_1)}{\dot p (t) + v (\rho_1)}
      -
      \frac{\dot p (t) \, \rho_2 \, v (\rho_2)}{\dot p (t) + v (\rho_2)}
      -
      \frac{\dot p (s) \, \rho_1 \, v (\rho_1)}{\dot p (s) + v (\rho_1)}
      +
      \frac{\dot p (s) \, \rho_2 \, v (\rho_2)}{\dot p (s) + v (\rho_2)}}
    \\
    & = &
    \modulo{
      g\left(\rho_1, \dot p (t)\right)
      -
      g\left(\rho_2, \dot p (t)\right)
      -
      g\left(\rho_1, \dot p (s)\right)
      +g\left(\rho_2, \dot p (s)\right)
    }
    \\
    & \leq &
    \Lip (g)
    \modulo{\dot p (t) - \dot p (s)} \, \modulo{\rho_1 - \rho_2} \,.
    \\
    & \leq &
    \Lip (g) \, \Lip (\dot p) \,
    \modulo{t - s} \, \modulo{\rho_1 - \rho_2} \,.
  \end{eqnarray*}
  where we used the Lipschitz regularity of $t \to p (t)$. Going back
  to $\bar I_5$:
  \begin{eqnarray*}
    \bar I_5
    & \leq &
    C
    \left(\modulo{x-y} + \modulo{t-s}\right) \;
    \modulo{\rho_1 (t,x) - \rho_2 (s,y)} \;
    \phi\left(\frac{t+s}{2}, \frac{x+y}{2}\right) \;
    \delta_r\left(\frac{t-s}{2}\right)
  \end{eqnarray*}
  where
  \begin{equation}
    \label{eq:C}
    C
    =
    \Lip (\chi) \left(1+\Lip (p)\right)
    \left(\Lip (p) L + \Lip (\rho v)\right)
    +
    \Lip (g) \Lip (\dot p)
  \end{equation}
  and
  \begin{eqnarray*}
    & &
    \int_{\reali^+} \! \int_{\reali} \! \int_{\reali^+} \! \int_{\reali}
    \bar I_5 \, \partial_x \omega_r\left(\frac{x-y}{2}\right)
    \d{x} \d{t} \d{y} \d{s}
    \\
    & \leq &
    C
    \int_{\reali^+} \! \int_{\reali} \! \int_{\reali^+} \! \int_{\reali}
    \left(\modulo{x-y} + \modulo{t-s}\right)
    \modulo{\rho_1 (t,x) - \rho_2 (s,y)}
    \\
    & &
    \qquad\qquad\qquad
    \phi\left(\frac{t+s}{2}, \frac{x+y}{2}\right) \;
    \delta_r\left(\frac{t-s}{2}\right)
    \partial_x \omega_r\left(\frac{x-y}{2}\right)
    \d{x} \d{t} \d{y} \d{s}
    \\
    & \leq &
    C
    \int_{\reali^+} \! \int_{\reali} \! \int_{\reali^+} \! \int_{\reali}
    \frac{\modulo{x-y} + \modulo{t-s}}{r}
    \modulo{\rho_1 (t,x) - \rho_2 (s,y)}
    \\
    & &
    \qquad\qquad\qquad
    \phi\left(\frac{t+s}{2}, \frac{x+y}{2}\right) \;
    \delta_r\left(\frac{t-s}{2}\right)
    \frac{1}{r} \boldsymbol{1}_{[-r,r]} (x-y) \max \modulo{\delta'}
    \d{x} \d{t} \d{y} \d{s}
  \end{eqnarray*}
  so that
  \begin{equation}
    \label{eq:sudata}
    \lim_{r\to0}
    \int_{\reali^+} \! \int_{\reali} \! \int_{\reali^+} \! \int_{\reali}
    \bar I_5 \, \partial_x \omega_r\left(\frac{x-y}{2}\right)
    \d{x} \d{t} \d{y} \d{s}
    =
    C
    \int_{\reali^+} \! \int_{\reali}
    \modulo{\rho_1 (t,x) - \rho_2 (t,x)} \d{x} \d{t}
  \end{equation}
  The other terms $\bar I_0, \bar I_1, \bar I_3$ in~\eqref{eq:3.38}
  are estimated exactly as in~\cite[Formul\ae~(3.40), (3.41)
  and~(3.43)]{KarlsenRisebro2003}. Therefore, we have
  \begin{eqnarray*}
    & &
    \lim_{r\to 0+}
    \int_{\reali^+} \!\int_{\reali}  \!\int_{\reali^+} \!\int_{\reali}
    \bar I_0 \;
    \omega_r\left(\frac{x-y}{2}\right) \,
    \delta_r\left(\frac{t-s}{2}\right) \,
    \d{x} \d{t} \d{y} \d{s}
    \\
    & = &
    \int_{\reali^+} \! \int_{\reali}
    \modulo{\rho_1 (t,x) - \rho_2 (t,x)}
    \partial_t\phi (t,x)
    \d{x} \d{t} \,.
    \\
    & &
    \lim_{r\to 0+}
    \int_{\reali^+} \!\int_{\reali}  \!\int_{\reali^+} \!\int_{\reali}
    \bar I_1 \;
    \omega_r\left(\frac{x-y}{2}\right) \,
    \delta_r\left(\frac{t-s}{2}\right) \,
    \d{x} \d{t} \d{y} \d{s}
    \\
    & = &
    \int_{\reali^+} \! \int_{\reali}
    \sgn\left[\rho_1 (t,x) - \rho_2 (t,x)\right]
    \left(
      f\left(t,x,\rho_1 (t,x)\right)
      -
      f\left(t,x,\rho_2 (t,x)\right)
    \right)
    \partial_x\phi (t,x)
    \d{x} \d{t} \,.
    \\
    & &
    \lim_{r\to 0+}
    \int_{\reali^+} \!\int_{\reali}  \!\int_{\reali^+} \!\int_{\reali}
    \bar I_3 \;
    \omega_r\left(\frac{x-y}{2}\right) \,
    \delta_r\left(\frac{t-s}{2}\right) \,
    \d{x} \d{t} \d{y} \d{s}
    \\
    & = &
    \int_{\reali^+} \! \int_{\reali}
    \sgn\left[\rho_1 (t,x) - \rho_2 (t,x)\right]
    \left(
      \partial_xf\left(t,x,\rho_1 (t,x)\right)
      -
      \partial_xf\left(t,x,\rho_2 (t,x)\right)
    \right)
    \phi (t,x)
    \d{x} \d{t} \,.
    \\
  \end{eqnarray*}
  We now closely follow~\cite[Proof of
  Theorem~1.1]{KarlsenRisebro2003}. The latter relations, inserted
  in~\eqref{eq:3.38} together with~\eqref{eq:sudata}, yield
  \begin{eqnarray*}
    & &
    -
    \int_{\reali^+} \! \int_{\reali}
    \Bigl(
    \modulo{\rho_1 (t,x) - \rho_2 (t,x)}
    \partial_t\phi (t,x)
    \\
    & &
    \qquad
    +
    \sgn\left[\rho_1 (t,x) - \rho_2 (t,x)\right]
    \left(
      f\left(t,x,\rho_1 (t,x)\right)
      -
      f\left(t,x,\rho_2 (t,x)\right)
    \right)
    \partial_x\phi (t,x)
    \\
    & &
    \qquad
    +
    \sgn\left[\rho_1 (t,x) - \rho_2 (t,x)\right]
    \left(
      \partial_xf\left(t,x,\rho_1 (t,x)\right)
      -
      \partial_xf\left(t,x,\rho_2 (t,x)\right)
    \right)
    \phi (t,x)
    \Bigr)
    \d{x} \d{t}
    \\
    & \leq &
    C
    \int_{\reali^+} \! \int_{\reali}
    \modulo{\rho_1 (t,x) - \rho_2 (t,x)} \d{x} \d{t}
  \end{eqnarray*}
  for any test function $\phi \in
  \Cc\infty$. By~\cite[Lemma~3]{Kruzkov}, the map
  \begin{displaymath}
    (\rho_1, \rho_2)
    \to \sgn(\rho_1 - \rho_2 ) \left(
      \partial_xf(t,x,\rho_1) -
      \partial_xf(t,x,\rho_2) \right)
  \end{displaymath}
  is Lipschitz continuous, hence, possibly renaming the constant,
  \begin{eqnarray*}
    & &
    -
    \int_{\reali^+} \! \int_{\reali}
    \Bigl(
    \modulo{\rho_1 (t,x) - \rho_2 (t,x)}
    \partial_t\phi (t,x)
    \\
    & &
    \qquad
    +
    \sgn\left[\rho_1 (t,x) - \rho_2 (t,x)\right]
    \left(
      f\left(t,x,\rho_1 (t,x)\right)
      -
      f\left(t,x,\rho_2 (t,x)\right)
    \right)
    \partial_x\phi (t,x)
    \Bigr)
    \d{x} \d{t}
    \\
    & \leq &
    C
    \int_{\reali^+} \! \int_{\reali}
    \modulo{\rho_1 (t,x) - \rho_2 (t,x)} \d{x} \d{t}
  \end{eqnarray*}
  Choose arbitrary $t_1,t_2$ in $\left]0, T\right[$ with $t_1 < t_2$
  and the test function
  \begin{displaymath}
    \phi (t,x)
    =
    \int_{-\infty}^t \left(\delta_r (\tau-t_1) - \delta_r (t_2-\tau)\right) \d\tau
    \;
    \int_{-R}^R \delta_r\left(\modulo{x-y}\right) \d{y}
  \end{displaymath}
  with $\delta_r$ as in~\eqref{eq:deltar} and in the limit $R \to
  +\infty$ and $r \to 0$, obtain
  \begin{displaymath}
    \norma{\rho_1 (t_2) - \rho_2 (t_2)}_{\L1 (\reali;\reali)}
    \leq
    \norma{\rho_1 (t_1) - \rho_2 (t_1)}_{\L1 (\reali;\reali)}
    +
    C
    \int_{t_1}^{t_2} \norma{\rho_1 (\tau) - \rho_2 (\tau)}_{\L1 (\reali;\reali)} \d\tau \,.
  \end{displaymath}
  An application of Gronwall Lemma allows to conclude the proof,
  exactly as in~\cite[Proof of Theorem~1.1]{KarlsenRisebro2003}.
\end{proofof}

\noindent\textbf{Acknowledgment:} The authors thank Boris Andreianov
for useful discussions. Support by the INdAM--GNAMPA 2014 project
\emph{Conservation laws in the modelling of collective phenomena}.

{\small{

    \bibliographystyle{abbrv}

    \bibliography{kru_r}

  }}

\end{document}